\numberwithin{equation}{section}
\newtheorem{theorem}{Theorem}[section]
\newtheorem{proposition}[theorem]{Proposition}
\newtheorem{definition}[theorem]{Definition}
\newtheorem*{theorem*}{Theorem}
\theoremstyle{remark}
\newtheorem{remark}[theorem]{\bf{Remark}}
\newcommand{\pbar}{\overline p}
\newcommand{\ah}{\alpha}
\newcommand{\n}{\nabla}
\newcommand{\g}{\gamma}
\newcommand{\G}{\Gamma}
\newcommand{\p}{\partial}
\newcommand{\Rme}{{ \rm Re}}
\begin{document}
%   \title[Pre-Darcy-Darcy-post-Darcy Productivity Index]{Productivity Index for Darcy and pre-/post-Darcy Flow} 
  
%\begin{comment}  
   \begin{frontmatter}
 \title{Productivity Index for Darcy and pre-/post-Darcy Flow (Analytical Approach)}

\author{Lidia Bloshanskaya\corref{cor1}}
\ead{bloshanl@newpaltz.edu}
\address{SUNY New Paltz, Department of Mathematics, 1 Hawk dr, New Paltz, NY 12561}
\author{Akif Ibragimov}
\ead{akif.ibraguimov@ttu.edu}
\address{Texas Tech University, Department of Mathematics and Statistics, Broadway and Boston, Lubbock, TX 79409-1042}
\author{Fahd Siddiqui}
\ead{fahd.siddiqui@ttu.edu}
\address{Texas Tech University, Bob L. Herd Department of Petroleum Engineering, Box 43111, Lubbock, TX 79409-3111}
\author{Mohamed Y. Soliman}
\ead{ mohamed.soliman@ttu.edu}
\address{Texas Tech University, Bob L. Herd Department of Petroleum Engineering, Box 43111, Lubbock, TX 79409-3111}

\cortext[cor1]{Corresponding author}

\begin{abstract}
   We investigate the impact of nonlinearity of high and low velocity flows on the well productivity index (PI). Experimental data shows the departure from the linear Darcy relation for high and low velocities. High-velocity (post-Darcy) flow occurring near wells and fractures is described by Forchheimer equations and is relatively well-studied. While low velocity flow receives much less attention, there is multiple evidence suggesting the existence of pre-Darcy effects for slow flows far away from the well. This flow is modeled via pre-Darcy  equation. We combine all three flow regimes, pre-Darcy, Darcy and post-Darcy, under one mathematical formulation dependent on the critical transitional velocities. This allows to use our previously developed framework to obtain the analytical formulas for the PI for the cylindrical reservoir. We study the impact of pre-Darcy effect on the PI of steady-state flow depending on the well-flux and the parameters of the equations.
\end{abstract}

\begin{keyword}
 non-Darcy\sep pre-Darcy\sep productivity index \sep Forchheimer flow \sep nonlinear \sep post-Darcy
\end{keyword}

   \end{frontmatter}

% \end{comment}

% \maketitle
% \begin{center}
% \vspace{-0.5cm}
%   Lidia Bloshanskaya, Akif Ibragimov, Fahd Siddiqui and Mohamed Y. Soliman
%   
%   \medskip
%  \today{} \bigskip
% \end{center}

%\tableofcontents

\section{Introduction}\label{sec:intro}

Darcy's pioneering work is at the heart of studies on fluid filtration in porous media. Based on experimental observations, Darcy's law linearly relates the fluid velocity to the pressure gradient. While it is commonly used by engineers, the limitation of it's applicability is generally accepted.  The linear model fails to take into account and explain phenomena occurring beyond the range of moderate velocity flow. 

The high-velocity bound  was recognized even by Darcy himself, \cite{Darcy}. The fast flows near wells and fractures are often described as post-Darcy, and can be described by variety of models. One of the most used is the model suggested by Dupuit and Forchheimer in \cite{Forchheimer, Dupuit} who proposed three different polynomial equations, two- and three-term laws and power law, to match the experimental data, see also \cite{Straughan-porosity}. We will refer to the flow in high-velocity zone as post-Darcy or Forchheimer flow.

% that for high velocity flow the linear relation between pressure gradient and velocity is not valid anymore. These effects are often described as post-Darcy, and are caused by presence of fractures, wells, turbulences and frictional effects. 
% because as the flow converges to the well, the area exposed to flow is decreased, causing a tremendous increase in velocity in the near well bore region.

% Some other approaches include....
% The transition between Darcy and post-Darcy (Forchheimer) flow is traditionally associated with Reynolds number quantifying the ratio of inertial to viscous forces. 

The low-velocity bound of validity of Darcy equation did not  receive the widespread attention among researchers and engineers. Multiple experimental results, however, point out the existence of the non-Darcy effects for slow flow, \cite{Fishel, Dudgeon, Soni}. We will refer to such flow as pre-Darcy or pre-laminar, \cite{basak77}.  It occurs in media with low permeability and high specific surface area and for the high-viscosity flow when the fluid can exhibit non-Newtonian properties, see \cite{Wen2006, Bagci14, longmuir, fand1987, Basniev-rus} and references therein.   Pre-Darcy effect constitutes the ``greater than proportional'' increase in fluid velocity $v$ with respect to pressure gradient $\n p$, and is described by power-law equation $|v|^{1-s}\sim|\n p|$ for $0\leq s< 1$, \cite{Wen2006, Bagci14}. The case $s=0$ corresponds to Darcy flow. For extensive overview of experimental results on pre-Darcy effect see, for example, \cite{longmuir} and references therein.

Following \cite{basak77} the relation between the pressure gradient and fluid velocity depending on the flow regime is shown on Fig.~\ref{fig:nolinearity}.

The petroleum reservoir is an example of environment exhibiting all three flow regimes: fast flow near the well (post-Darcy/Forchheimer), slow flow near the outer boundaries of the reservoir (pre-Darcy) and moderate flow in between (Darcy). The understanding of pre-Darcy effects can be especially important to improve the well management for the low rate oil reservoir, operating predominantly in pre-Darcy zone. This motivates us to study the impact of  combined non-Darcy effects on the value of Productivity Index (PI), the dimensionless characteristic of well capacity, \cite{Dake, Raghavan, Slider}. The PI is defined as a ration of the well-bore  flux, and the difference between the average pressures in the reservoir and on the well-bore. It is used to quantify the ability of the reservoir to deliver fluids to the well-bore: the better is the performance of the well, the higher is the value of the PI. While in general PI is  time-dependent it exhibits the stabilization property: once the well production is stabilized (i.e. the well-flux becomes constant) the PI becomes constant independent from the production  history or even the operating conditions (see references above). The corresponding flow regime is called pseudo-steady state (PSS)  and PSS pressure, velocity and PI serve as pseudo-attractor to the transient one, \cite{Raghavan, AIVW09, ABI12, ABI15}. From reservoir engineering point of view the PSS regime is attained at the time when the perturbation from the well reaches the exterior boundary  of the well  drainage area. Therefore engineers use the computed PSS PI  to estimate the size of the area impacted by the well.

In \cite{AIVW09, ABI11} we studied PI for post-Darcy (fast) flow extensively. This paper is primarily dedicated to the model of pre-Darcy coupled with post-Darcy and Darcy flows in cylindrical reservoir. In addition in this paper we use some parameters obtained from our own experimental setup. Three flow zones (fast, moderate and slow) are considered and the transition between the zones is characterized by the critical velocities. The flow in each zone can be described by either of three regimes: Darcy, pre-, post-Darcy.  We use previously developed framework to obtain the analytical formulas for PI in different flow descriptions. 
We illustrate the impact of pre-Darcy effect depending on the well production rate, power $s$ in pre-Darcy relation and transitional velocities. 
Our results show considerable impact of the pre-Darcy effect on the overall PI of the well, especially for smaller production rate/flux and larger values of the power $s$.
The results can be of especial interest for improvement economic recovery of hydrocarbons in low-rate reservoirs.

The paper is organized as follows. Sec.~\ref{sec:probl-statement} addresses the mathematical and physical background of non-Darcy flow in porous media and the concept of PI. In Sec.~\ref{sec:analytical} the analytical  formulas for PI for axial-symmetric flows are presented. Sec.~\ref{sec:comp-results} is devoted to the computational results and their analysis. Finally, Sec.~\ref{sec:experiment} provides the results of our experiments aimed to  estimate the extent of the pre-Darcy effect (the value of power $s$) and the range of its significance (velocity range).

\section{Background and problem description}\label{sec:probl-statement}
Consider a bounded reservoir domain $U$ with the internal boundary $\G_w$ (well surface) and external impermeable boundary $\G_e$. We consider the flow of slightly compressible fluid characterized by the equation of state, \cite{Muskat} 
\begin{equation}\label{eq-state}
 \frac{\partial \rho}{\partial p}=\gamma\rho, \quad\gamma\sim10^{-8},
\end{equation}
where $\rho(p,t)$ is the fluid density, $p(x,t)$ is the pressure and $\gamma$ is compressibility coefficient.
Darcy's law $v=-\alpha\n p$ describes the proportional relation between the velocity of the fluid $v$ and pressure gradient $\nabla p$. Here $\ah=\mu/k$ is Darcy coefficient, $\mu$ is the viscosity of the fluid, $k$ is the permeability of the porous media. The linear relation however holds only for certain range of velocities (and pressure gradients, correspondingly) for laminar flow. Near the well the flow is very fast since the area exposed to the flow is very small. For the velocities above some critical one $v_F$ the flow becomes post-Darcy/Forchheimer. At the same time further away from the well the velocity drops significantly and becomes pre-laminar/pre-Darcy for velocities below some critical $v_D$. To model the flow in the whole reservoir we consider the following combined equation of motion
\begin{equation}\label{eq:g-v-np}
  g(|v|)v =-\nabla p
\end{equation}
where 
\begin{align}
 &\label{eq:prd} g(\xi)=\lambda\xi^{-s},\quad 0\leq s<1, &&\text{for}\quad 0\leq\xi\leq v_D,\quad \mbox{Pre-Darcy flow;}\\ 
 &\label{eq:d}g(\xi)=\ah,&&\text{for}\quad v_D\leq \xi\leq v_F,\quad \mbox{Darcy flow;}\\
 &\label{eq:pstd}g(\xi)=\ah  +\beta \xi,  &&\text{for}\quad \xi\geq v_F,\quad \mbox{2-Forchheimer flow.}
\end{align}
Here  $\beta$ is the Forchheimer coefficient.
%{\color{blue}more on $g$-Forchheimer somewehere?? and then change notaion for $g$ in here} 
The resulting relation between $|v|$ and $|\n p|$ and the comparison with the linear Darcy law is depicted on Fig.~\ref{fig:nolinearity}. 
\begin{figure}[!h]
 \centering
 \includegraphics[width=0.7\textwidth]{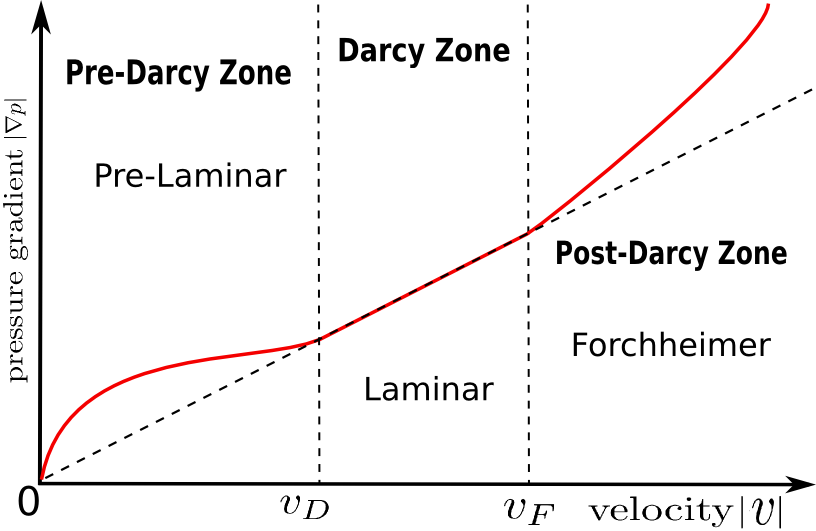}
 \caption{Deviation from linear Darcy law}
 \label{fig:nolinearity}
\end{figure}

\begin{remark}
The transition from pre-Darcy to Darcy flow and from Darcy to post-Darcy flow is often associated with the dimensionless Reynolds number ${\rm Re}$, quantifying the ration of inertial to viscous forces, \cite{Bear, Muskat}.  However, unlike  the case of the pipe flow, there is large discrepancy in the transitional values of ${\rm Re}$ for the flow in porous media (from $\Rme\sim10$ to $\Rme\sim 1000$ for transition to Forchheimer, \cite{Bagci14, Bear,krakowska14}). Instead of $\Rme$  we will use the velocity as the critical characteristic for transition between the flow regimes. 
\end{remark}

% 
% The transition of the flow from pre-Darcy, to Darcy and then to Forchheimer and turbulent is usually associated with Reynolds number (
% \begin{equation}
% { \rm Re}=\frac{{\rm inertial\ forces}}{{\rm viscous\ forces}}=\frac{\rho v d}{\mu}=\frac{v d}{\nu}
% \end{equation}
% where $\rho$ is fluid density (kg/m$^3$), $v$ is velocity $m/s$, $d$ is grain size (m), $d=(k/\phi)^{1/2}$ (\cite{Bear}), $\mu$ is dynamic viscosity of the fluid (kg/(m s)), $\nu=\mu/\rho$ is kinematic viscosity (m$^2$/s).
% The critcal values for the Reynolds number: pre-Darcy flow $\Rme<0.1\div1$;  Darcy flow $0.1\div1<\Rme<10\div15$; Forchheimer flow $10\div15<\Rme$

%http://www.viscopedia.com/viscosity-tables/substances/crude-oil/

% 
% Pre-Darcy phenomenon arises from the fluid to rock interaction and therefore the parameter $\ah$ would depend on the type of flow and would have the units of ${\rm Pa}\cdot {\rm sec}^{-s+1}\cdot {\rm m}^{s-2}$. 

Equation \eqref{eq:g-v-np} can be rewritten in the equivalent form as an explicit formula for  $v$ through $\nabla p$:
\begin{equation}\label{eq:v-Knp}
  v=-K(|\nabla p|)\n p,
\end{equation}
where 
\begin{align}
 &K(|\n p|)=\lambda^{-\frac1{1-s}}|\n p|^{\frac{s}{1-s}},\quad 0\leq s<1, &&\text{pre-Darcy flow;}\\
 &K(|\n p|)=\ah^{-1} &&\text{Darcy flow;}\\
 &K(|\n p|)=\frac2{\ah+\sqrt{\ah^2+4\beta|\n p|}} &&\text{post-Darcy (Forchheimer) flow.}
\end{align}

Combining the continuity equation
\begin{equation}
 \frac{\partial \rho}{\partial t}+\nabla\cdot(\rho v)=0 
\end{equation}
with equation \eqref{eq-state} we obtain 
\begin{align}
  \frac{\partial \rho}{\partial t}= \frac{\partial \rho}{\partial p} \frac{\partial p}{\partial t}
  =-\rho(\nabla\cdot v)-\nabla\rho\cdot v
  =-\rho(\nabla\cdot v)-\frac{\partial \rho}{\partial p}(\nabla p\cdot v).
\end{align}
Thus
\begin{equation}\label{eq:cont+stateFull}
  \frac{\partial p}{\partial t}=-\gamma^{-1} (\nabla\cdot v)-\nabla p\cdot v.
\end{equation}
Since  in natural reservoirs $\gamma\sim10^{-8}\,{\rm Pa}^{-1}$, in engineering practice the second term is always neglected in comparison to the first one,\footnote{In case of axial-symmetric  flows it is easy to show the continuous dependence of the solution  of \eqref{eq:cont+stateFull} on $\gamma$ and its affinity to the solution of  \eqref{eq:p-1}, see Proposition~\ref{lem:vel-diff-gamma}.} \cite{Raghavan, Dake}.  In view of \eqref{eq:v-Knp} we arrive to the truncated  degenerate parabolic equation of pressure
\begin{equation}\label{eq:p-1}
  \g\frac{\partial p}{\partial t}=-(\nabla\cdot v)=\n\cdot(K(|\n p|)\n p).
\end{equation}

For truncated equation \eqref{eq:p-1} engineers use an integral characteristic of reservoir-well system 
to measure the capacity of the well, \cite{Dake, Raghavan, Slider}. 
This characteristic is called the productivity index (PI) and can be formulated as a function over solution of \eqref{eq:p-1}.
\begin{definition}
 The PI in the reservoir $U$ with boundaries $\Gamma_e$ and $\Gamma_w$ is defined as a  functional 
\begin{equation}\label{def:pi}
J_g(t)=\frac{Q(t)}{\pbar_U(t)-\pbar_{\Gamma_w}(t)}=\frac{\int_{\Gamma_w}v\cdot N\,ds}{\frac1{|U|}\int_Up(x,t)dx- \frac1{|\Gamma_w|}\int_{\Gamma_w}p(x,t)ds}.
\end{equation}
Here $Q(t)$ is the flux/production rate on the boundary $\Gamma_w$ and the difference in the denominator is called pressure-drawdown.
% where
% \begin{align}
%   &Q(t)=\int_{\Gamma_w}u\cdot N\,ds,\\
%   &PDD(t)=\pbar_U(t)-\pbar_{\Gamma_w}(t)=\frac1{|U|}\int_Up(x,t)dx- \frac1{|\Gamma_w|}\int_{\Gamma_w}p(x,t)ds.\label{def:av-pressure} 
% \end{align}
\end{definition}

In general, the functional PI is  time dependent. However if the production rate $Q(t)$ stabilizes over time to a constant value $Q$, then under certain conditions on the boundary data the PI as well stabilizes over time to specific constant value regardless of the initial pressure distribution (see \cite{Muskat, Raghavan, ABI11, ABI12, ABI15}). This value is determined by the special pressure distribution, called the {\it pseudo-steady state $($PSS$)$} solution such that 
\begin{equation}
 \frac{\partial p_s}{\partial t}=-\g A=const.
\end{equation}
Thus the PSS has the form
\begin{equation}
 p_s(x,t)=-\g At+B+W(x),\label{pW}
\end{equation}
where $W(x)$ is the basic profile corresponding to the constant flux $Q$ and is defined as the solution of the steady state BVP
\begin{align} 
&-\g A = \nabla \cdot K(|\nabla W|)\nabla W,\label{W_eq}\\
&\frac{\partial W}{\partial N}=0 \quad\hbox{on}\quad \Gamma_e,\label{neumann_W} \\
 &W=0 \quad\hbox{on}\quad \Gamma_w.\label{pss-d}
\end{align}
Here $A=Q/|U|$.

For the PSS case the definition \eqref{def:pi} can be written as
\begin{equation}\label{def:pi-pss}
 J=\frac{Q |U|}{\int_U W(x)\,dx}.
\end{equation}

From reservoir engineering point of view the PSS regime is attained at the time when the perturbation from the well reaches the exterior boundary  of the well  drainage area $\Gamma_e$. Therefore engineers use the computed PSS PI \eqref{def:pi-pss} to estimate the size of the area impacted by the well.

\section{Analytical PI for axial-symmetric flows}\label{sec:analytical}
In this section we obtain the analytical formulas for PI in  cylindrical reservoir in axial-symmetric flows, Fig.~\ref{fig:cyl-reserv}.

 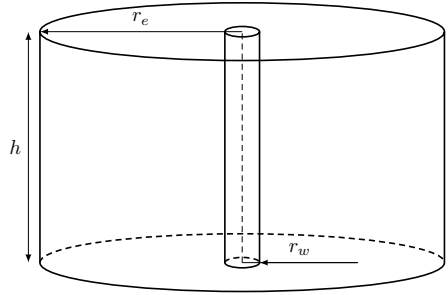
\begin{figure}[!h]
 \centering
\resizebox{0.5\textwidth}{!}{
\begin{tikzpicture}
\draw[thick] (-3.5,4) -- (-3.5,0) arc (180:360:3.5cm and 0.5cm) -- (3.5,4) ++ (-3.5,0) circle (3.5cm and 0.5cm);
\draw[densely dashed,thick] (-3.5,0) arc (180:0:3.5cm and 0.5cm);

\draw[thick] (-.3,4) -- (-.3,0) arc (180:360:.3cm and 0.09cm) -- (.3,4) ++ (-.3,0) circle (.3cm and 0.09cm);
\draw[densely dashed, thick] (-.3,0) arc (180:0:.3cm and 0.09cm);
\draw[densely dashed] (0,0) -- (0,4);

\draw[>=latex,<->] (-3.7,0) -- node[left]{$h$} (-3.7,4) ;
\draw[>=latex,<-] (-3.5,4) -- node [above]{$r_e$} (0,4) ;
% \draw (0,4) -- node [above]{$r_w$} (1.6,3.8) ;
\draw (0,0) -- node [above]{$r_w$} (2,0) ;
\draw[>=latex,<-] (0.3,0) --  (2,0) ;
% \draw[<-] (0.27,3.95) --  (1.6,3.8) ;
\end{tikzpicture} }
\caption{Cylindrical reservoir   with thickness $h$, radius $r_e$ with a fully penetrated vertical well of radius $r_w$ positioned in its center}
\label{fig:cyl-reserv}
\end{figure}

% \begin{figure}[!h]
%  \centering
% \includegraphics[width=4.cm]{figs/cyl-reservoir2D.jpg}
% \caption{Cylindrical reservoir with vertical well}
% \label{fig:cyl-reserv}
% \end{figure}

First we show the affinity of solutions of full and truncated equations \eqref{eq:cont+stateFull} and \eqref{eq:p-1} for small values of compressibilty $\gamma$. For axial-symmetric pseudo-steady flows the velocity is a scalar and equations \eqref{eq:cont+stateFull} and \eqref{eq:p-1}  will take the form 
 \begin{align}
  &\g A=(\nabla\cdot v_\g)+\g\nabla p\cdot v_\g=\frac1r\frac{\partial}{\partial r}(r v_\g)-\g g(v_\g)v_\g^2,\quad r_w<r<r_e,\label{eq:cont+pss-radial}\\
   &\g A=\nabla\cdot v=\frac1r\frac{\partial}{\partial r}(r v),\quad r_w<r<r_e\label{eq:trunc-radial}
\end{align}
with boundary conditions
\begin{equation}\label{bc-0}
v(r_e)=v_{\g}(r_e)=0.%v|_{\G_e}=v_{\g}|_{\G_e}=0.
\end{equation}

\begin{proposition}\label{lem:vel-diff-gamma}
 The difference $|v_\g-v|\leq \g C$, where $C$ depends on the reservoir radius $r_e$ only.
\end{proposition}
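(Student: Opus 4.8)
The plan is to reduce both boundary value problems to first-order ODEs in the flux variable $rv$ and to exploit that the truncated problem is explicitly solvable. Integrating \eqref{eq:trunc-radial}, which reads $(rv)'=\g A r$, and imposing $v(r_e)=0$ from \eqref{bc-0} gives the closed form
\[
 v(r)=\frac{\g A}{2}\Big(r-\frac{r_e^2}{r}\Big).
\]
In particular $|v(r)|\le \g C_0$ on $[r_w,r_e]$, with $C_0$ controlled by $r_e$ (and the fixed data $r_w,A$). The decisive structural fact is that the truncated velocity is of order $\g$.

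Next I would subtract the two equations. Setting $w=v_\g-v$ and using \eqref{eq:cont+pss-radial} and \eqref{eq:trunc-radial}, the linear divergence terms cancel and I am left with
\[
 \frac1r\frac{\p}{\p r}(r w)=\g\, g(v_\g)\,v_\g^2,\qquad w(r_e)=0 .
\]
Since this is again first order in $rw$ with a single endpoint condition, integrating from $r$ to $r_e$ yields the representation
\[
 w(r)=-\frac{\g}{r}\int_r^{r_e}\rho\, g(v_\g(\rho))\,v_\g(\rho)^2\,d\rho .
\]
Everything thus reduces to controlling the nonlinear source $g(v_\g)v_\g^2$, which in turn requires an a priori bound on $v_\g$ itself.

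The heart of the argument, and the step I expect to be the main obstacle, is the a priori estimate $|v_\g|\le \g C_1$. I would obtain it by recasting the full problem as the fixed-point equation $v_\g=v-\frac{\g}{r}\int_r^{r_e}\rho\, g(v_\g)v_\g^2\,d\rho$ and running a contraction-mapping argument on the ball $\{|u|\le 2\g C_0\}$. Two features of the model make this work. First, for $\g$ small every admissible velocity lies below the fixed transition value $v_D$, so only the pre-Darcy branch \eqref{eq:prd} is active and the source becomes $g(\xi)\xi^2=\lambda|\xi|^{2-s}$. Second, since $0\le s<1$ the exponent $2-s>1$ is superlinear: on a ball of radius $O(\g)$ the source is $O(\g^{2-s})$ and its Lipschitz constant is $O(\g^{1-s})$. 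Hence the map sends the ball into itself and is a contraction once $\g$ is small, delivering existence, uniqueness, and the bound $|v_\g|\le 2\g C_0$ simultaneously. The degeneracy of $|\xi|^{2-s}$ at $\xi=0$ is harmless precisely because $s<1$; this is exactly where the hypothesis \eqref{eq:prd} is used. (A continuity/barrier estimate on $\sup|v_\g|$ would give the same bound, but the fixed point is cleaner and also settles solvability.)

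Finally I would substitute the a priori bound into the representation for $w$. With $|v_\g|\le \g C_1$ and the integrand dominated by $\lambda|v_\g|^{2-s}$, the integral is $O(\g^{2-s})$ while the prefactor obeys $\g/r\le \g/r_w$, so
\[
 |v_\g-v|=|w(r)|\le \g C ,
\]
with $C$ depending only on $r_e$ (through $C_0,C_1$ and the fixed parameters). In fact the estimate produces the stronger smallness $O(\g^{3-s})$, which comfortably implies the stated linear-in-$\g$ bound.
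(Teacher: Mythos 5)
Your proof is correct and shares the paper's skeleton: subtract \eqref{eq:trunc-radial} from \eqref{eq:cont+pss-radial}, integrate the resulting first-order identity from $r$ to $r_e$ using \eqref{bc-0}, and bound the quadratic source term. Where you genuinely diverge is at the a priori bound on $v_\g$, which is precisely the step the paper waves through: its proof simply asserts ``Obviously, $|v_\g|\leq C_0$ where $C_0$ does not depend on $\g$ for $\g<1$'' and then concludes $|v_\g-v|\le \g C$ in one integration. You instead prove the sharper bound $|v_\g|\le 2\g C_0$ by a Banach fixed-point argument on the integral reformulation, exploiting that on a ball of radius $O(\g)$ only the pre-Darcy branch \eqref{eq:prd} is active, so the source is $\lambda|\xi|^{2-s}$ with $2-s>1$, hence $C^1$ at the origin with Lipschitz constant $O(\g^{1-s})$ on the ball. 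This buys you more than the paper: existence and uniqueness of $v_\g$ come for free, and your final estimate improves to $O(\g^{3-s})$, strictly stronger than the stated $O(\g)$. The price is a mild but real restriction: the contraction requires $\g$ small enough that $2\g C_0<v_D$ and the contraction factor $O(\g^{2-s})$ is below one, whereas the paper claims (without proof) a constant valid for all $\g<1$; since the proposition is an asymptotic continuity statement aimed at $\g\sim10^{-8}$, this restriction is harmless in context. Two small accuracy points in your favor: your representation of $w$ correctly carries the Jacobian factor $\rho$ inside the integral, which the paper's displayed identity $\frac{\p}{\p r}\bigl(r(v_\g-v)\bigr)=\g\, g(v_\g)v_\g^2$ drops (a typo, immaterial since $r\le r_e$), and you note honestly that the constant depends on $r_w$ and $A$ as well as $r_e$ — a looseness present in the paper's statement too.
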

\begin{proof}
% Integrating \eqref{eq:trunc-radial} in view of condition \eqref{bc-0} one gets a solution 
% \begin{equation*}
%  v(r)=\gamma A(r_e^2-r^2)/(2r)%\to 0%\quad\text{ as }\quad \g\to0.
% \end{equation*}
% where $A=Q/|U|=Q/(\pi h(r_e^2-r_w^2))$. 
Obviously, $|v_\g|\leq C_0$ where $C_0$ does not depend on $\g$ for $\g<1$.
% Considering additional boundary condition $v_\g|_{\Gamma_w}=\g \frac{Q}{2\pi r_w h}$ for \eqref{eq:cont+pss-radial} on the well boundary
% it follows from the maximum principle that  $\max\limits_{r_w\leq r\leq r_e}v_\gamma\leq\g \frac{Q}{2\pi r_w h}\to 0$ for $\g\to0$.
Subtracting \eqref{eq:cont+pss-radial} and \eqref{eq:trunc-radial} we get
\begin{equation}
   \frac{\p}{\p r}(r(v_\g-v))=\g g(v_\g)v_\g^2\leq \g C, \quad (v_\g-v)|_{r=r_e}=0.
\end{equation}
Integrating in $r$ the formula above one gets the result.
\end{proof}
From now on we consider the truncated equation \eqref{eq:trunc-radial} and for simplicity $\g=1$.

For cylindrical reservoir  
\begin{equation}\label{A which is T}
 A=\frac{Q}{|U|}=\frac{Q}{2\pi h (r^2_e-r^2_w)}.
\end{equation}
Integrating \eqref{eq:trunc-radial} in view of condition \eqref{bc-0} one gets a solution 
\begin{equation}\label{def:rad-vel}
 v(r)=A\,(r_e^2-r^2)r^{-1}=\frac{Q(r_e^2-r^2)}{2\pi h r(r^2_e-r^2_w)},\quad r_w<r<r_e.
\end{equation}

% In this case the problem for the equation \eqref{eq:p-1} will take the form 
% \begin{equation} \label{eq:trunc-radial}
%  \g A=\nabla\cdot v=\frac1r\frac{\partial}{\partial r}(r v).\\
%  v|_{\G_e}=0
% \end{equation}
% resulting in the solution 
% \begin{equation}\label{def:rad-vel}
%  v(r)=\gamma \frac{A(r_e^2-r^2)}{2r}=\frac{Q(r_e^2-r^2)}{2\pi h r(r^2_e-r^2_w)}.
% \end{equation}

% 
% in 3D
% \begin{equation}
%  v(r)=\frac{Q(r_e^2-r^2)}{2\pi Hr(r^2_e-r^2_w)}
% \end{equation}

Solving for the radius $r$ gives
\begin{align}
%   &2\pi r(r^2_e-r^2_w)v=Q(r_e^2-r^2)\notag\\
%   &Q r^2+2\pi r(r^2_e-r^2_w)v-Qr_e^2=0\notag\\
  r(v)=\frac{-\pi (r^2_e-r^2_w) v+ \sqrt{\pi^2v^2 (r^2_e-r^2_w)^2 +(Q/h)^2r_e^2}}{Q/h}.\label{r-through-v}
\end{align}
Then using \eqref{r-through-v} we obtain the critical values of the radius $r_D=r(v_D)$ and $r_F=r(v_F)$ depending on the critical velocities $v_D$ and $v_F$.
 We consider three flow zones with respect to the velocity of the flow, see Fig.~\ref{fig:flow-zones}:
\begin{itemize}
 \item red: near the well $r_w\leq r\leq r_F$ with the fast flow $ v_F\leq v(r)\leq v(r_w)=q/(2\pi h r_w)$;
 \item blue: middle of the reservoir $r_F\leq r\leq r_D$ with moderate flow $ v_D\leq v(r)\leq v_F$;
\item green: near the outer boundary of the reservoir $r_D\leq r\leq r_e$, with the slowest flow $ 0\leq v(r)\leq v_D$.
\end{itemize}
\begin{figure}
   \centering
  \includegraphics[width=0.8\textwidth]{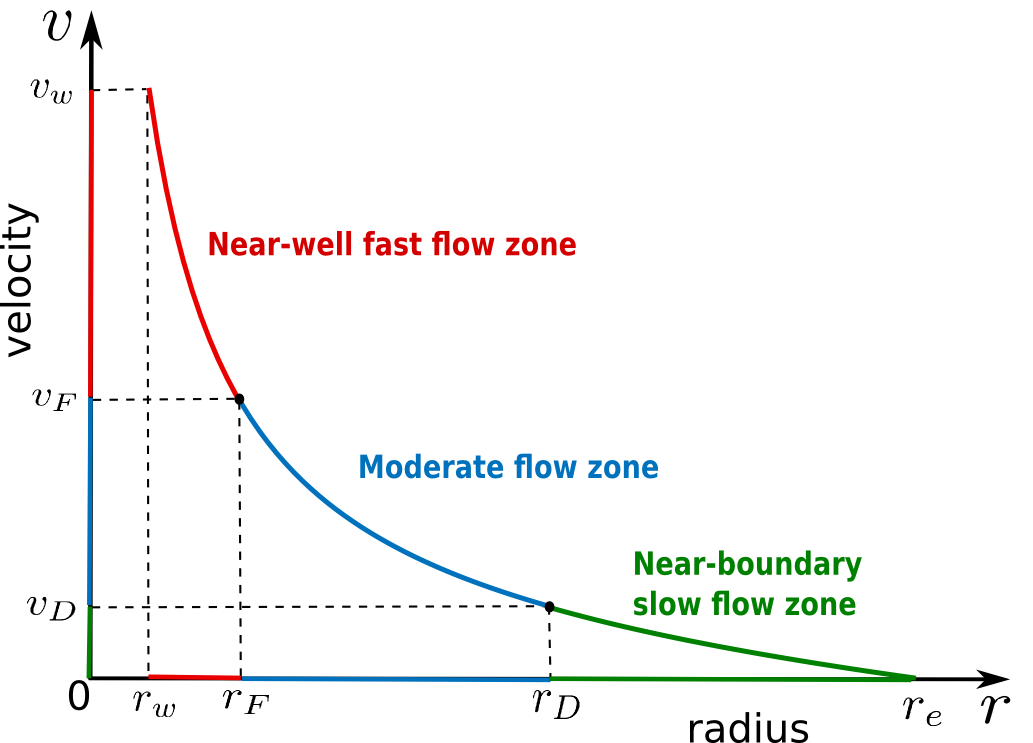}
  \caption{Flow zones, depending on critical velocities and radii}
  \label{fig:flow-zones}
\end{figure}
% \begin{align}
%  & r_D\leq r(v)\leq r_e & - &\ \text{slow Pre-Darcy flow $0\leq v(r)\leq v_D$;}\\
%  & r_F\leq r(v)\leq r_D & - &\ \text{laminar Darcy flow $v_D\leq v(r)\leq v_F$;}\\
%   & r_w\leq r(v)\leq r_F & - &\ \text{fast Forchheimer flow $v_F\leq v(r)\leq v(r_w)=\frac{Q}{2\pi h r_w}$.}
% \end{align}

In order to investigate the  pre- and post-Darcy effects of the flow we introduce  five flow regimes: Darcy  in all three zones; Forchheimer  in all three zones; Forchheimer in the near-well zone, Darcy in the middle and near boundary zones; Darcy in near-well and middle zones and pre-Darcy in near-boundary zone; Forchheimer in near-well, Darcy in the middle and pre-Darcy in the near-boundary zones. For convenience we use the following description of each regime presented below in the table. 
\begin{center}
 \begin{tabular}{c  || c | c| c }
 \multirow{2}{*}{notation} & near-well zone  & middle zone & near-boundary zone \\
 &(red)&(blue)&(green)\\
 \hline\hline
 Darcy (D) &Darcy&Darcy&Darcy\\
 Forch (F)&Forchheimer&Forchheimer&Forchheimer\\
 FDD&Forchheimer&Darcy&Darcy\\
 DDpD&Darcy&Darcy&preDarcy\\
  FDpD&Forchheimer&Darcy&preDarcy\\
 \end{tabular}
  \medskip
%  \captionof{table}{Table of designations}
\end{center}

For $A$ as in \eqref{A which is T} denote
 \begin{align}
  S_D[r_1,r_2]&%=\ah\int_{r_1}^{r_2} f(r)\,dr 
  =\ah \int_{r_1}^{r_2}(r_e^2-r^2)^2r^{-1}\,dr,\label{I_D}\\
 S_F[r_1,r_2]&%=\int_{r_1}^{r_2}\!\!(\ah+\beta v)f(r)\,dr
 =\int_{r_1}^{r_2}\!\!\left[\ah+\beta A\, (r_e^2-r^2)r^{-1}\right](r_e^2-r^2)^2r^{-1}\,dr,\label{I_F}\\
 S_{pD}[r_1,r_2]&%=\lambda \int_{r_1}^{r_2}v^{-s}f(r)\,dr
 =\lambda A^{-s}\int_{r_1}^{r_2}(r_e^2-r^2)^{2-s}r^{s-1}\,dr.\label{I_pD}
\end{align}
The PI for different regimes can be expressed via the analytical formulas below
\begin{proposition}\label{prop:analyticPI}
Let $L=2\pi h(r_e^2-r_w^2)^2$. Then\\
the PI for Darcy flow in all three zones is
\begin{equation}
   J_{D} =\dfrac{L
 }{S_D[r_w,r_e]};
\end{equation}
the PI for Forchheimer flow in all three zones is
\begin{equation}
    J_{F} =\dfrac{L }{S_F[r_w,r_e]};
\end{equation}
the PI for Forchheimer-Darcy-Darcy flow  is
\begin{equation}
  J_{FDD} =\dfrac{L}{S_F[r_w,r_F]+S_D[r_F,r_e]}; 
\end{equation}
the PI for Darcy-Darcy-pre-Darcy flow is
\begin{equation}
    J_{DDpD} =\dfrac{L}{S_D[r_w,r_D]+S_{pD}[r_D,r_e]};
\end{equation}
the PI for Forchheimer-Darcy-pre-Darcy flow  is
\begin{equation}
   J_{FDpD} =\dfrac{L}{S_F[r_w,r_F]+S_D[r_F,r_D]+S_{pD}[r_D,r_e]}.
\end{equation}
\end{proposition}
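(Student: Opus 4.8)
The plan is to evaluate the pseudo-steady-state formula \eqref{def:pi-pss}, $J = Q|U|/\int_U W\,dx$, explicitly in each regime, the only nontrivial ingredient being the volume integral of the basic profile $W$. Because the flow is axial-symmetric, $W=W(r)$ and the volume integral collapses to a radial one, $\int_U W\,dx = 2\pi h\int_{r_w}^{r_e} W(r)\,r\,dr$, so the whole task reduces to computing $\int_{r_w}^{r_e} W(r)\,r\,dr$ in each of the five cases.

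First I would recover $W(r)$ from the already-known velocity profile. Since $W$ solves \eqref{W_eq}--\eqref{pss-d}, it is exactly the PSS pressure associated with the radial velocity $v$, so the equation of motion \eqref{eq:g-v-np} reads $-W'(r) = g(|v|)\,v$ in the radial variable. Integrating outward from the well and using $W(r_w)=0$ from \eqref{pss-d} gives $W(r) = \int_{r_w}^{r} g(|v(\rho)|)\,|v(\rho)|\,d\rho$, into which I substitute the explicit profile $v(\rho)=A(r_e^2-\rho^2)/\rho$ from \eqref{def:rad-vel}. Inserting the three branches \eqref{eq:prd}--\eqref{eq:pstd} of $g$ turns the integrand $g(|v|)|v|$ into an explicit power of $(r_e^2-\rho^2)/\rho$: the linear term $\alpha(r_e^2-\rho^2)/\rho$ in the Darcy zone, the quadratic Darcy-plus-Forchheimer combination in the fast zone, and the fractional power $\lambda A^{1-s}(r_e^2-\rho^2)^{1-s}\rho^{s-1}$ in the slow zone.

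The key computational step is then to interchange the order of integration in $\int_{r_w}^{r_e}\bigl(\int_{r_w}^{r} W'(\rho)\,d\rho\bigr)r\,dr$ over the triangle $r_w\le\rho\le r\le r_e$. Performing the inner integral in $r$ replaces the weight $r$ by $(r_e^2-\rho^2)/2$, so that $\int_{r_w}^{r_e} W(r)\,r\,dr = \tfrac12\int_{r_w}^{r_e} W'(\rho)(r_e^2-\rho^2)\,d\rho$. This averaging weight supplies precisely the extra factor $(r_e^2-\rho^2)$ which, multiplied by the factor already carried by $g(|v|)|v|$, produces the integrands $(r_e^2-r^2)^2 r^{-1}$ and $(r_e^2-r^2)^{2-s}r^{s-1}$ of the functionals $S_D,S_F,S_{pD}$ in \eqref{I_D}--\eqref{I_pD}. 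A term-by-term comparison then shows that each zone contributes $\tfrac{A}{2}$ times the corresponding $S$-functional over that radial interval; assembling the pieces and substituting $A=Q/|U|$ yields each of the claimed formulas, with numerator $L$ and denominator equal to the sum of the relevant $S$-functionals.

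For the pure regimes D and F the radial integral runs over all of $[r_w,r_e]$, while for the mixed regimes FDD, DDpD and FDpD I would split it at the critical radii $r_F=r(v_F)$ and $r_D=r(v_D)$ from \eqref{r-through-v}, using the appropriate branch of $g$ on each subinterval. Since $W$ is built as an antiderivative of a piecewise-defined integrand, it is automatically continuous at $r_D$ and $r_F$, so no boundary terms appear and the zone contributions simply add. I expect the main obstacle to be not this additivity but the bookkeeping of constants in passing from $|v|$ to $r$ -- in particular checking that the pre-Darcy branch reproduces $S_{pD}$ with the correct prefactor $\lambda A^{-s}$, and that the numerator $L=2\pi h(r_e^2-r_w^2)^2$ emerges consistently from the product of $Q|U|$ with the factor $\tfrac{A}{2}$.
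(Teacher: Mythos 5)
Your proposal is correct, and it arrives at the $S$-functionals by a route that is mechanically different from, though ultimately equivalent to, the paper's. The paper never reconstructs $W$ at all: it multiplies the PSS equation \eqref{W_eq} by $W$, integrates by parts using \eqref{neumann_W}--\eqref{pss-d}, and obtains the energy identity $\frac{Q}{|U|}\int_U W\,dx=\int_U K(|\nabla p|)|\nabla p|^2\,dx=\int_U g(|v|)|v|^2\,dx$, which converts the PI into $J=Q^2\big/\big(2\pi h\int_{r_w}^{r_e} r\,g(v)v^2\,dr\big)$; it then substitutes \eqref{def:rad-vel} and the branches \eqref{eq:prd}--\eqref{eq:pstd}. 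You instead build the pressure profile $W(r)=\int_{r_w}^{r}g(|v(\rho)|)\,|v(\rho)|\,d\rho$ explicitly and evaluate $\int_{r_w}^{r_e}W(r)\,r\,dr$ by interchanging the order of integration, which replaces the weight $r$ by $(r_e^2-\rho^2)/2$. Since $\rho\,v(\rho)=A(r_e^2-\rho^2)$ by \eqref{def:rad-vel}, that weight equals $\rho\,v(\rho)/(2A)$, so your Fubini step is exactly the radial instantiation of the paper's integration by parts, and the two computations agree term by term; your observation that $W$, being an antiderivative of a piecewise integrand, is continuous at $r_D$ and $r_F$ so the zone contributions simply add is also sound. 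What your route buys is the explicit drawdown profile $W(r)$ (useful beyond the PI) and a transparent treatment of the interfaces; what the paper's route buys is brevity and geometry-independence, since the energy identity holds in an arbitrary domain and underlies their general PI framework. Two bookkeeping cautions, both of which your final paragraph correctly anticipates but which deserve to be made explicit: first, with the paper's convention that $v>0$ denotes the inflow speed, the pressure increases outward, so $W'(r)=+g(v)v$ rather than $-W'(r)=g(v)v$; your use of magnitudes silently repairs this sign. Second, when assembling the numerator you must take the geometric volume $|U|=\pi h(r_e^2-r_w^2)$ in \eqref{def:pi-pss}; the factor $2$ appearing in \eqref{A which is T} is a convention already absorbed into \eqref{def:rad-vel} (one checks $2\pi r_w h\,v(r_w)=Q$), and reading \eqref{A which is T} literally as defining $|U|$ would inject a spurious factor of $2$ into $L$. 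With these two conventions fixed, your zone contributions are $\tfrac{A}{2}S_D$, $\tfrac{A}{2}S_F$, $\tfrac{A}{2}S_{pD}$ as you claim, and the constants assemble exactly to $L=2\pi h(r_e^2-r_w^2)^2$ in all five regimes.
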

\begin{proof}
 First, formula \eqref{def:pi-pss} for the PSS PI can be rewritten in terms of velocity $v$. Since $\n p_s=\n W$ we have 
\begin{align}
\frac Q{|U|}&\int_U W(x)\,dx=\int_U K(|\n p|)|\n p|^2\,dx
=\int_U |v||\n p|\,dx=\int_U g(|v|)|v|^2\,dx.
\end{align}

Then the PI in the reservoir can be calculated as (since $v$ is scalar)
\begin{align}\label{pi-analytic0}
 J=\frac{Q^2}{\int_U g(v)v^2\,dx}=\frac{Q^2}{2\pi h\int_{r_w}^{r_e} r g(v)v^2\,dr}.%\hspace{3.cm}\\
% =\frac{Q^2}{2\pi h} \left[\int_{r_w}^{r_F} (\ah +\beta v(r))v^2(r)r\,dr+
%  \int_{r_F}^{r_D} \ah v^2(r)r\,dr+
%  \int_{r_D}^{r_e} \ah (v(r))^{-s+2}r\,dr\right]^{-1}\notag
 \end{align}
 Using the definitions \eqref{eq:prd}-\eqref{eq:pstd} of $g(v)$ corresponding to the flow and formulas \eqref{def:rad-vel} for velocity one gets the formulas for PI.
% one has
%  \begin{align}\label{pi-analytic}
%  J&=\dfrac{2\pi h(r_e^2-r_w^2)^2
%  }{{\displaystyle\int_{r_w}^{r_F}(\ah+\beta v)f(r)\,dr+
%  \int_{r_F}^{r_D}\ah f(r)\,dr+
%  \int_{r_D}^{r_e}\lambda v^{-s}f(r)\,dr}}\notag\\
%  &=\dfrac{2\pi h(r_e^2-r_w^2)^2
%  }{{\displaystyle\beta \int_{r_w}^{r_F}vf(r)\,dr+
%  \ah \int_{r_w}^{r_D}f(r)\,dr+
%  \lambda\int_{r_D}^{r_e} v^{-s}f(r)\,dr}}
% \end{align}
\end{proof}

So-called ``skin factor'' is often used by petroleum engineers to analytically  model  the difference from the pressure drop predicted by Darcy's law and  observed data, \cite{schlum}. For this purpose we also reformulate the Proposition~\ref{prop:analyticPI} 
\begin{proposition}\label{prop:analyticPI-1}
Non-Darcy PI can be found from the Darcy PI:\\
Forchheimer flow in all three zones
\begin{equation}
 {J_F} =J_{D}\,\dfrac{S_D[r_w,r_e]}{S_F[r_w,r_e]};
\end{equation}
Forchheimer-Darcy-Darcy flow 
\begin{equation}
{J_{FDD}} =J_{D}\,\dfrac{S_D[r_w,r_e]}{S_F[r_w,r_F]+S_D[r_F,r_e]}; 
\end{equation}
Darcy-Darcy-pre-Darcy flow
\begin{equation}\label{skin-DDpD}
 {J_{DDpD}} =J_{D}\,\dfrac{S_D[r_w,r_e]}{S_D[r_w,r_D]+S_{pD}[r_D,r_e]};
\end{equation}
Forchheimer-Darcy-pre-Darcy flow 
\begin{equation}\label{skin-FDpD}
   {J_{FDpD}} =J_{D}\,\dfrac{S_D[r_w,r_e]}{S_F[r_w,r_F]+S_D[r_F,r_D]+S_{pD}[r_D,r_e]}.
\end{equation}
\end{proposition}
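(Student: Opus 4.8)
The plan is to obtain all four identities directly from Proposition~\ref{prop:analyticPI} by eliminating the common numerator $L$ in favor of the Darcy PI. The key observation is that the Darcy formula $J_D = L/S_D[r_w,r_e]$ can be inverted to read $L = J_D\, S_D[r_w,r_e]$, and this single relation converts each of the remaining PI expressions into the claimed ``skin-factor'' form.

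First I would record the five formulas from Proposition~\ref{prop:analyticPI}: all share the numerator $L = 2\pi h(r_e^2-r_w^2)^2$ and differ only in the denominator, which is a sum of the integrals $S_D$, $S_F$, $S_{pD}$ taken over the appropriate subintervals determined by the critical radii $r_F$ and $r_D$. From the Darcy case I would then solve for $L = J_D\, S_D[r_w,r_e]$. Substituting this into the Forchheimer formula $J_F = L/S_F[r_w,r_e]$ yields immediately $J_F = J_D\, S_D[r_w,r_e]/S_F[r_w,r_e]$, which is the first asserted identity.

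The remaining three identities follow by the same substitution applied to $J_{FDD}$, $J_{DDpD}$, and $J_{FDpD}$: in each case the numerator $L$ is replaced by $J_D\, S_D[r_w,r_e]$ while the denominator is left untouched, reproducing the formulas in \eqref{skin-DDpD}, \eqref{skin-FDpD}, and the two preceding displays. This is a purely algebraic reformulation that requires no new analysis; there is no genuine obstacle beyond keeping track of which subintervals enter each denominator. The value of the restatement is interpretive rather than computational, since it exhibits the ratio of $S_D[r_w,r_e]$ to the corresponding non-Darcy denominator as the multiplicative correction to the Darcy PI, which is precisely the analytical analogue of the engineers' skin factor.
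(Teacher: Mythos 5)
Your proof is correct and matches the paper's approach: the paper states Proposition~\ref{prop:analyticPI-1} as a direct reformulation of Proposition~\ref{prop:analyticPI} with no separate argument, the implicit reasoning being exactly your substitution $L = J_D\,S_D[r_w,r_e]$ into each non-Darcy formula. Nothing is missing.
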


The equations can be used to dead-zone flow in case of time-dependent problem. Namely, \cite{kamin-vazquez} showed that there exists a solution for initial value Cauchy problem such that at each fixed moment of time $t$ the support of this solution is bounded, and as $t\to\infty$ the support is also increasing to infinity. Since the PSS regime is attained when the well perturbation reaches the outer reservoir boundary, the obtained skin  formulas \eqref{skin-DDpD} and \eqref{skin-FDpD} can be used to calculate the effective radius ($r_e$)   predicted by Darcy assumption only.

% v_D=\left[\begin{array}{l}
% 3.5\times10^{-7}\\
% 3.5\times10^{-6}\\
% 3.5\times10^{-5}
% r_w=0.1 {\rm m}
% r_e=1600 {\rm m}
% Q/h=3\times10^{-5} \frac{\rm m^2}{\rm sec}

% 
% \subsection{Skin-factor}\label{sec:skin}
% We define skin-factor $S$ so that
% \begin{equation}
%  J=\frac{J_D}{1+S}
% \end{equation}

\section{Analysis of Computational Results}\label{sec:comp-results}

Using the formulas for the PI in Proposition~\ref{prop:analyticPI} we illustrate the influence of the nonlinearity of the flow regime on the value of the PI of the well.
Note, that all the values of the PI are dimensionless, obtained by multiplying the computational results by the factor $\ah/(2\pi h)$, ${\rm (Pa \cdot sec)/m^3}$.
The particular values of the parameters are presented below. \\
The geometrical parameters
\begin{itemize}
  \item  $r_e= 1000$ m - the radius of the reservoir
  \item  $r_w=0.3$ m - the radius of the well
  \item $h=10$ m - the reservoir thickness
\end{itemize}
The hydrodynamical parameters
\begin{itemize}
 \item $\ah=1.01\times10^{10}\ {\rm (Pa \cdot sec)/m^2}$
 \item $\lambda=1.01\times10^{10}\  ({\rm Pa} \cdot {\rm sec}^{1-s})/{\rm m}^{2-s}$
 \item $\beta=2.4318\times10^{11}\  {\rm (Pa \cdot sec^2)/m^3}$
 \item $Q/h=10^{-4}\ {\rm m^2/sec}$
 \item $s=0.3, 0.5, 0.7$
\end{itemize}
The critical velocities
\begin{itemize}
 \item $v_F=10^{-5}\ {\rm m/sec}$ - transition between Forchheimer and Darcy flow
 \item $v_D=10^{-7}\ {\rm m/sec}$ - transition between Darcy and preDarcy flow
\end{itemize}

\subsection{Dependence of PI on flux $Q/h$}
The dependence of PI on the value of flux $Q/h$ is presented in Table~\ref{tab:PIvsQ} and Fig.~\ref{fig:vsQ-gen} - \ref{fig:vsQ-maxPI}. The graphs are obtained for power $s=0.7$.
% and critical velocities are taken
% In all computations in this section we take   $v_F=10^{-5}\ {\rm m/sec}$ and $v_D=10^{-7}\ {\rm m/sec}$.

The Darcy PI  does not depend on the flux $Q/h$, so if  in all three flow zones the flow is described by Darcy law, the PI stays constant (blue curve on graphs, the Darcy column in Table). In case of post-Darcy (Forchheimer) flow, the PI dramatically decreases with increasing flux $Q/h$ (red curve, column Forch). One can see that in the presence of Forchheimer description (red, yellow, green curves and columns Forch, FDD, FDpD) its impact on PI increases with the increasing flux. For large values of $Q/h$ the PI with post-Darcy becomes almost zero, see Fig.~\ref{fig:vsQ-gen} and the last row ($Q/h=10^4\ {\rm m^2/sec}$) in Table~\ref{tab:PIvsQ} (columns Forch, FDD and FDpD). The obtained result indicate that the higher values of $Q/h$ correspond to higher velocity \eqref{def:rad-vel} and, consequently, to larger value of transitional radius $r_F$ in \eqref{r-through-v} and hence to wider ``fast flow'' zone.
In the absence of Forchheimer component (DDpD) the values of PI stabilize to constant Darcy value, since $r_D$ is increasing and pre-Darcy zone is decreasing, see purple curve on Fig.~\ref{fig:vsQ-gen} and \ref{fig:vsQ-maxPI} and column DDpD.

\begin{figure}
  \centering
  \includegraphics[width=0.7\textwidth]{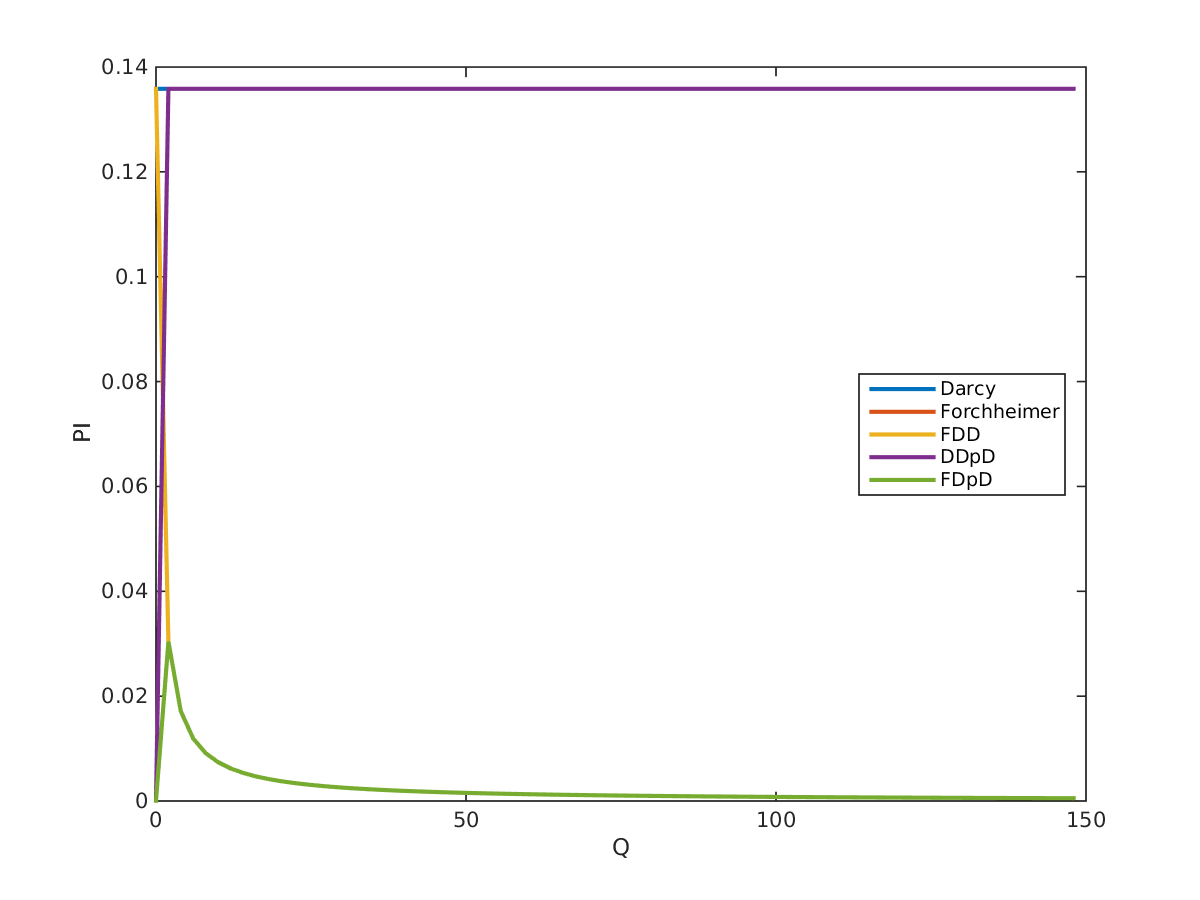}
  \vspace{-0.6cm}
  \caption{Dependence on $Q/h$, $s=0.7$ }
  \label{fig:vsQ-gen}
\end{figure}

For small values of flux the situation is the opposite: the predominant flow regime is pre-Darcy, since  the transition radius $r_D$ is decreasing with decreasing flux, making the slow-flow zone larger. Fig.~\ref{fig:vsQ-smallQ} illustrates that while the PIs for Forchheimer and FDD descriptions are almost identical to Darcy for small values of flux, the  DDpD and FDpD PIs are almost zero for small values of $Q/h$. The exact values can be seen in the corresponding columns in Table~\ref{tab:PIvsQ} for $Q/h=2\cdot10^{-7}, 10^{-4}\ {\rm m^2/sec}$. The pre-Darcy effect is greater for larger values of power $s$.
  \begin{figure}
    \centering
  \includegraphics[width=0.7\textwidth]{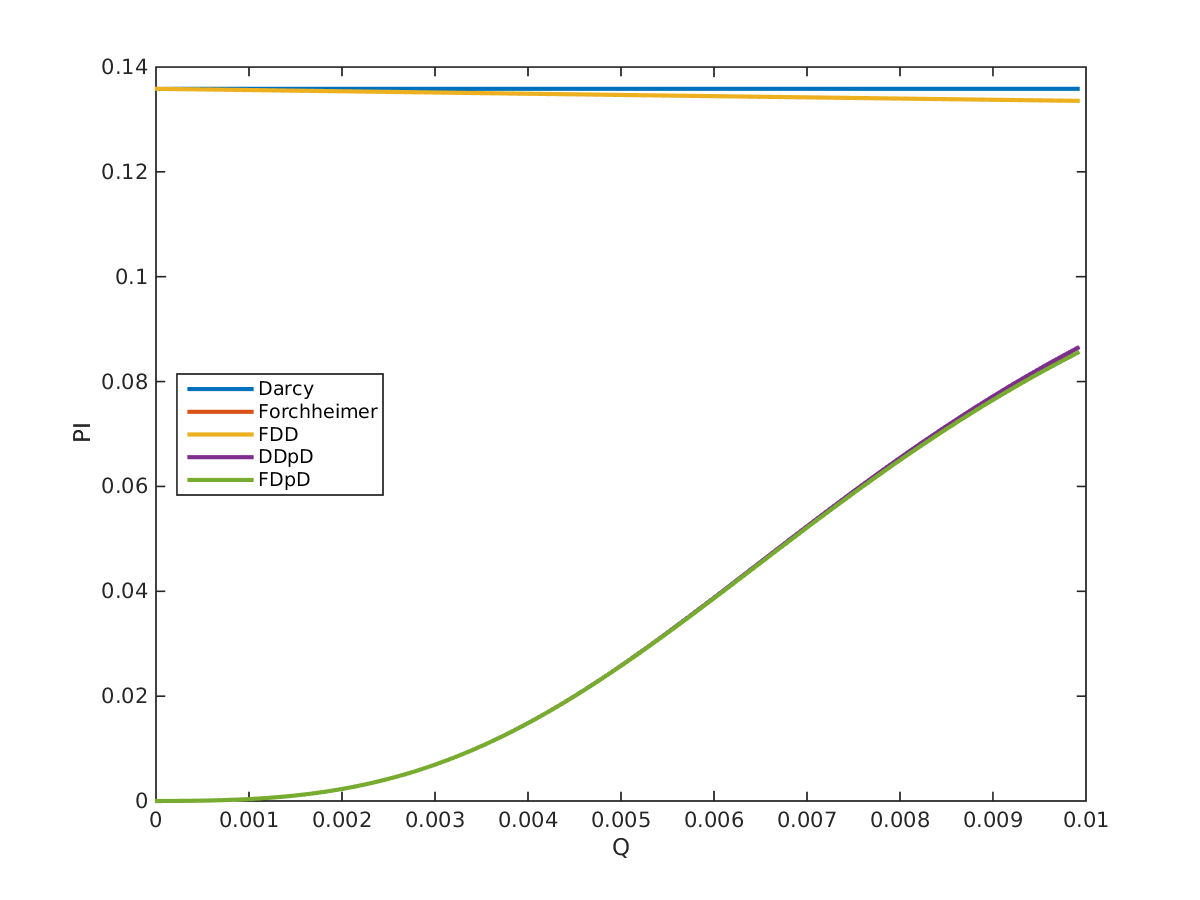}
  \vspace{-0.6cm}
  \caption{Dependence on $Q/h$, for small values of flux resulting in dominating pre-Darcy effects, $s=0.7$}
  \label{fig:vsQ-smallQ}
\end{figure}
Fig.~\ref{fig:vsQ-maxPI} shows that  FDpD PI   increases along with DDpD PI (predominant pre-Darcy for small $Q/h$), reaches the maximum value, and starts to decrease along with FDD case (predominant Forchheimer for large $Q/h$). The corresponding maximum values will depend  on the value of power $s$ in pre-Darcy case and are presented in Table~\ref{tab:PIvsQ}.
\begin{figure}
  \centering
  \includegraphics[width=0.7\textwidth]{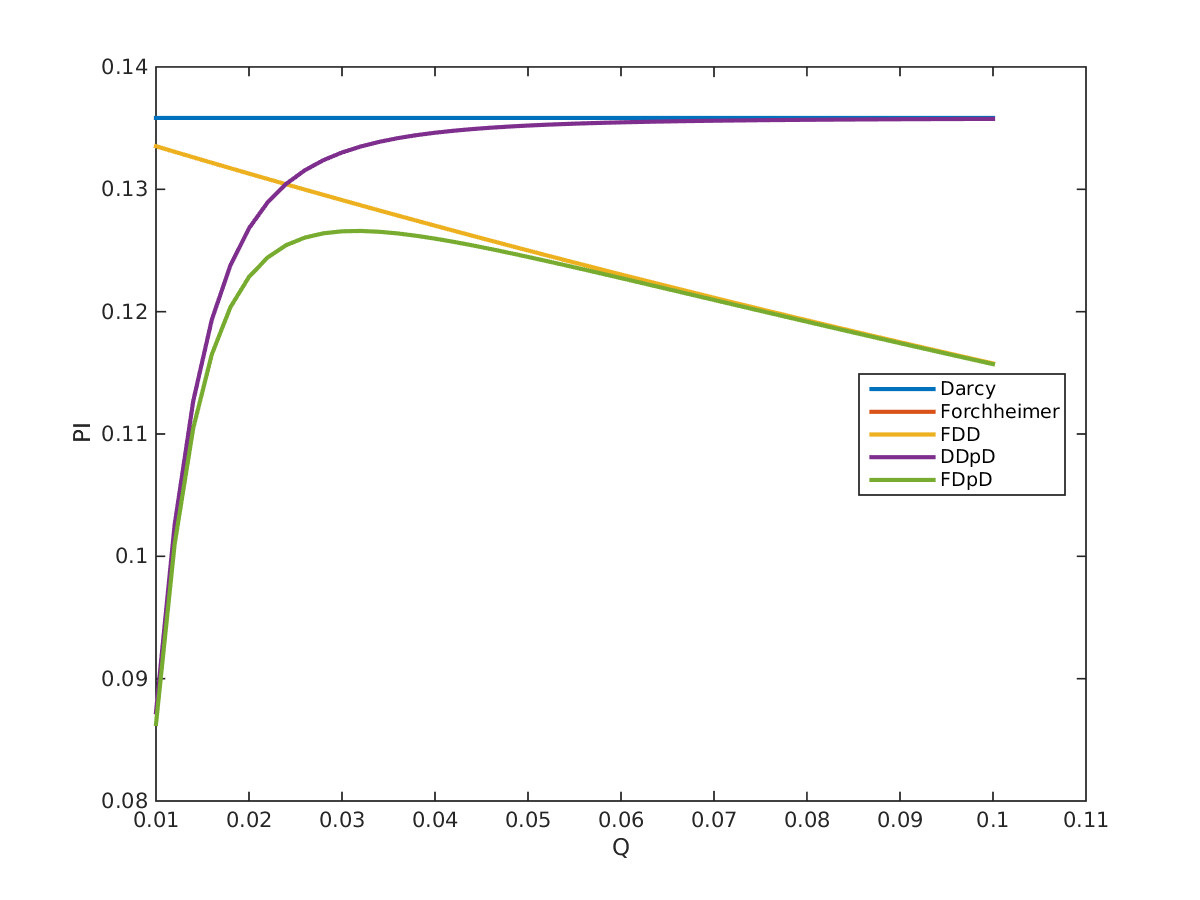}
  \vspace{-0.6cm}
  \caption{Dependence on $Q/h$, FDpD PI reaches maximum value, $s=0.7$}
 \label{fig:vsQ-maxPI}
\end{figure}

  \begin{table*}%[!h]
 \centering
 \begin{tabular}{c  | c| l l l l| c}
    flux $Q$ & Darcy & Forch & FDD & DDpD & FDpD& power $s$\\
  \hline\hline
 \multirow{2}{*}{2e-7}
      & \multirow{15}{*}{0.1358}  &  \multirow{2}{*}{0.1358 }  & \multirow{2}{*}{0.1358 } &	 \multicolumn{2}{c|}{4.5e-8}  & ${ 0.7}$\\
        & &  &  & \multicolumn{2}{c|}{2.9e-4}& $0.3$ \\\cline{3-7}\cline{1-1}
 \multirow{2}{*}{ 1e-4}
      & &  \multirow{2}{*}{0.1358}  & \multirow{2}{*}{0.1358}  & \multicolumn{2}{c|}{5.65e-6} & $0.7$\\ 
     & &   &    & \multicolumn{2}{c|}{5.21e-3}   & $0.3$ \\ \cline{3-7}\cline{1-1}
 \multirow{2}{*}{ 1e-3} 
      &  & \multirow{2}{*}{0.1356 }  & \multirow{2}{*}{0.1356 }  & 3.64e-4   & 3.64e-4 & $0.7$ \\  
      &  &    &   & 9.14e-2   & 9.13e-2&  $0.3$  \\  \cline{3-7}\cline{1-1}
  5.95e-3
     & &  0.1345  & 0.1345  & 0.1354   & 0.134 &$0.3$  \\  \cline{3-7}\cline{1-1}
 \multirow{2}{*}{ 1e-2}
       &    & \multirow{2}{*}{0.1335 }  & \multirow{2}{*}{0.1335 }  & 0.0873  & 0.0863 & $0.7$  \\  
      &   &    &    & 0.1357     & 0.1334  & $0.3$   \\  \cline{3-7}\cline{1-1}
 3.18e-2
       &   & 0.1287   & 0.1287   & 0.1335    & 0.1266  & $0.7$ \\\cline{3-7}\cline{1-1}
 \multirow{2}{*}{ 1e-1}
       &   &\multicolumn{2}{|c}{ \multirow{2}{*}{0.1158}}    & 0.1358    & 0.1157 & $0.7$  \\  
        &  &     &     & 0.1358     & 0.1158 & $0.3$  \\  \cline{3-7}\cline{1-1}
 1   
         &  & \multicolumn{2}{|c}{ 0.0497}   & 0.1358    & 0.0497  & $0.7, 0.3$ \\ \cline{3-7}\cline{1-1}
 1e1  
        &   & \multicolumn{2}{|c}{ 7.41e-3}   & 0.1358    & 7.41e-3& $0.7, 0.3$   \\  \cline{3-7}\cline{1-1}
 1e4 
     &  & \multicolumn{2}{|c}{ 7.84e-6}   & 0.1358    & 7.84e-6  & $0.7, 0.3$   
 \end{tabular} 
 \vspace{.1cm}
 \caption{PI for different flow regimes vs. flux $Q$ for $s=0.7$, $s=0.3$ }
 \label{tab:PIvsQ}
\end{table*}

\subsection{Dependence of PI on pre Darcy power $s$}
The impact of power $s$ in pre-Darcy relation \eqref{eq:prd} on the values of PI for various values of $Q/h$ and $v_D$ is presented in Tables~\ref{tab:PIvsS-Q} and \ref{tab:PIvsS-vD} and Fig.~\ref{fig:vsS-vd-6} -   \ref{fig:vsS-vd-7-Q-2}. Everywhere in this section  $v_F=10^{-5}{\rm m/sec}$. 

If $s=0$ the pre-Darcy relation \eqref{eq:prd} is identical to Darcy law \eqref{eq:d}, so the FDpD and DDpD PI are equal to Darcy PI. Figures \ref{fig:vsS-vd-6} -   \ref{fig:vsS-vd-7-Q-2} show that the smaller is the transitional velocity $v_D$ to pre-Darcy regime (i.e. the smaller is the slow green zone), the longer these PIs stay the same.   As $s$ increases, the FDpD and DDpD PI drop  to 0, as the pre-Darcy effect increases. For the considered value of flux $Q/h=10^{-4}{\rm m^2/sec}$ the values of DDpD and FDpD PIs are almost the same, see the corresponding lines of Table~\ref{tab:PIvsS-Q}.
Comparison of Fig.~\ref{fig:vsS-vd-7} and \ref{fig:vsS-vd-7-Q-2} shows that given the same transitional velocity $v_D$, the larger value of flux ($Q/h=10^{-2}{\rm m^2/sec}$ on Fig.~ \ref{fig:vsS-vd-7-Q-2}) results in the decreasing in the pre-Darcy effect. PI stays similar to the Darcy PI for approximately $s<0.5$. The Forchheimer effect is also seen  for approximately $s<0.65$,  causing the FDpD PI be slightly smaller than DDpD PI. Again, the particular numbers can be seen in Table~\ref{tab:PIvsS-Q}.\\
Table \ref{tab:PIvsS-vD} further explores the impact of transitional velocity $v_D$ on pre-Darcy power $s$ for flux $Q/h=10^{-4}{\rm m^2/sec}$.
  \begin{table*}%[!h]
 \centering
 \begin{tabular}{c  | l l | c}
    power $s$& DDpD & FDpD & flux $Q/h$\\
  \hline\hline
  \multirow{2}{*}{$0$}
    &\multirow{2}{*}{0.1358 } &0.1358  & 1e-4\\
     &   & 0.1335 & 1e-2 \\\hline
  \multirow{2}{*}{$0.1$}
     & \multicolumn{2}{|c|}{0.0806}  & 1e-4\\
     &  0.1358  & 0.1335  & 1e-2  \\\hline
%   \multirow{2}{*}{$0.2$}
%    & 0.0249689 & 0.0249682 & 1e-4\\
%    &   0.135819 & 0.133507 & 1e-2 \\\hline
  \multirow{2}{*}{$0.3$}
    & \multicolumn{2}{|c|}{5.21e-3}& 1e-4\\
    &   0.1357  & 0.1334 & 1e-2 \\\hline
%   \multirow{2}{*}{$s=0.4$}
%     & 0.000970658 & 0.000970657& 1e-4\\
%     &   0.13531 & 0.133016 & 1e-2\\\hline
  \multirow{2}{*}{$0.5$}
    & \multicolumn{2}{|c|}{1.76e-4}& 1e-4\\
   &   0.1331  & 0.1309   & 1e-2\\\hline
%   \multirow{1}{*}{$s=0.6$}
%     & 3.16405e-05 & 3.16405e-05& 1e-4\\
%     &   0.122774 & 0.120882 & 1e-2\\\hline
  \multirow{2}{*}{$0.7$}
   & \multicolumn{2}{|c|}{5.65e-06} & 1e-4\\
    &   0.0873 & 0.0864& 1e-2 \\\hline
%   \multirow{2}{*}{$s=0.8$}
%     & 1.00355e-06 & 1.00355e-6& 1e-4\\
%    &   0.0346585 & 0.0345061 & 1e-2 \\\hline
%   \multirow{2}{*}{$s=0.9$}
%    & 1.771e-07&1.771e-07 & 1e-4\\
%     &   0.0083143 & 0.0083055& 1e-2\\\hline
  \multirow{2}{*}{$1$}
   & \multicolumn{2}{|c|}{3.1e-08} & 1e-4\\
    & \multicolumn{2}{|c|}{1.66e-3}   & 1e-2\\
 \end{tabular} 
 \vspace{.1cm}
 \caption{PI for different values of power $s$ and flux $Q$ for $v_D=10^{-7}{\rm m/sec}$. }
 \label{tab:PIvsS-Q}
\end{table*}

 \begin{table*}
 \centering
 \begin{tabular}{c  | l l | c}
    power $s$& DDpD & FDpD & vel $v_D$\\
  \hline     \hline
  \multirow{1}{*}{$0$}
  & 0.1359 & 0.1358 & 1e-9, -7, -6\\
     \hline
  \multirow{3}{*}{$0.1$}
     & 0.1359 & 0.1358 & 1e-9\\
     & \multicolumn{2}{|c|}{0.0806} & 1e-7\\
     & \multicolumn{2}{|c|}{0.04941} & 1e-6  \\     \hline
  \multirow{3}{*}{$0.2$}
  & \multicolumn{2}{|c|}{0.1358} & 1e-9\\
   & \multicolumn{2}{|c|}{0.025 } & 1e-7\\
   &  \multicolumn{2}{|c|}{0.012 } & 1e-6 \\     \hline
  \multirow{3}{*}{$0.3$}
  & \multicolumn{2}{|c|}{0.1354}& 1e-9\\
    & \multicolumn{2}{|c|}{5.211e-3}& 1e-7\\
    &  \multicolumn{2}{|c|}{2.5398e-3}& 1e-6 \\     \hline
%   \multirow{3}{*}{$s=0.4$}
% & 0.132571 & 0.132553& 1e-9\\
%     & 0.000970658 & 0.000970657& 1e-7\\
%     &   0.000512253 & 0.000512252 & 1e-6\\     \hline
  \multirow{3}{*}{$0.5$}
  & \multicolumn{2}{|c|}{0.1128}& 1e-9\\
    & \multicolumn{2}{|c|}{1.76052e-4}& 1e-7\\
   &   \multicolumn{2}{|c|}{1.01204e-4}  & 1e-6\\     \hline
%    \multirow{3}{*}{$s=0.6$}
% & 0.0505634 & 0.0505607& 1e-9\\
%     & 3.16405e-05 & 3.16405e-05& 1e-7\\
%     &   1.9686e-05 & 1.9686e-05& 1e-6\\     \hline
  \multirow{3}{*}{$0.7$}
  & \multicolumn{2}{|c|}{9.064e-3}& 1e-9\\
   & \multicolumn{2}{|c|}{ 5.652e-06}& 1e-7\\
    &   \multicolumn{2}{|c|}{3.774e-06}& 1e-6 \\     \hline
%    \multirow{3}{*}{$s=0.8$}
% & 0.00115883 & 0.00115882& 1e-9\\
%      & 1.00355e-06 & 1.00355e-6& 1e-7\\
%     &   7.13476e-07 & 7.13476e-07& 1e-6 \\     \hline
%    \multirow{3}{*}{$s=0.9$}
% & 0.000140219 & 0.000140219& 1e-9\\
%     & 1.771e-07&1.771e-07 & 1e-7\\
%      &   1.33007e-07 & 1.33007e-07& 1e-2\\     \hline
  \multirow{3}{*}{$1$}
  &\multicolumn{2}{|c|}{ 1.682e-05 }& 1e-9\\
   & \multicolumn{2}{|c|}{3.105e-08} & 1e-7\\
    & \multicolumn{2}{|c|}{2.446e-08}& 1e-6\\
 \end{tabular} 
 \vspace{.1cm}
 \caption{PI for different values of $s$ and $v_D$ for $Q/h=10^{-4} {\rm m^2/sec}$. }
 \label{tab:PIvsS-vD}
\end{table*}

\subsection{Dependence of PI on critical velocity $v_F$}
Here we take   $v_D=10^{-8}\ {\rm m/sec}$ to illustrate the dependence of PI on critical velocity $v_D$ in Figures~\ref{fig:vsVF-s=0.3}, for $s=0.3$, and \ref{fig:vsVF-s=0.7}, for $s=0.7$. We also take the smaller size of the reservoir $r_e=100 {\rm m}$.

\subsection{Dependence of PI on critical velocity $v_D$}
Table~\ref{tab:PIvsvD} and  Figures~\ref{fig:vsVD-s=0.05}, \ref{fig:vsVD-s=0.3} illustrate the dependence of PI on  $v_D$, the transitional velocity between Darcy and pre-Darcy flow. As usual we consider the flux $Q/h=10^{-4}{\rm m^2/sec}$ and Forchheimer transitional velocity $v_F=10^{-5}{\rm m/sec}$. To underline the impact of $v_D$ the smaller reservoir is considered with $r_e=100{\rm m}$.
The $v_D=0$ means that there is no slow flow zone and the FDpD PI is the same as FDD PI, equal 0.1976.  If  $v_D=v_F=$1e-5, then there is now medium flow (Darcy) zone, and the FDpD PI is the same as FpDpD PI: 0.0051 for $s=0.3$, Fig.~\ref{fig:vsVD-s=0.3}, and 0.1208 for $s=0.05$, Fig.~\ref{fig:vsVD-s=0.05}. The light-blue line and the last column in Table~\ref{tab:PIvsvD} represent  the case when all the reservoirs is described by the pre-Darcy equation, and the corresponding PI is the smallest.

 \begin{table*}
 \centering
 \begin{tabular}{c  | l l l l l l || c}
          $s \backslash  v_D$&\multicolumn{1}{|c}{0}  &  \multicolumn{1}{c}{5e-07}  & \multicolumn{1}{c}{1.5e-6} &  \multicolumn{1}{c}{5e-6} & \multicolumn{1}{c}{9.5e-6}  &\multicolumn{1}{c||}{1e-5}&preDarcy\\
%           & (Forch) &   &   &  &   & (FpDpD)\\
          \hline
$0.05$ & \multirow{2}{*}{0.1976 } & 0.1754 & 0.1502& 0.1296& 0.1214&0.1208&0.1058\\
$0.3$ & & 0.0173  & 0.0084& 0.0058& 0.0051& 0.0051&0.0042\\
 \end{tabular} 
 \vspace{.1cm}
 \caption{FDpD PI dependent on $v_D$, for $Q/h=10^{-4} {\rm m^2/sec}$, $v_F10^{-5} {\rm m/sec}$, $r_e=100 {\rm m}$. }
 \label{tab:PIvsvD}
\end{table*}

\section{Experimental Results for pre-Darcy flow}\label{sec:experiment}

The aim of this section is to show the existence of deviations from linear Darcy law below a certain flow velocity ($v_D$). This pre-Darcy effect is investigated experimentally by conducting multiple experiments on real field sandstone core samples. The purpose of the experiment was also to estimate the extent of the pre-Darcy effect (the power $s$) and the range of its significance (velocity range). 

Experiments were conducted by employing the U-tube apparatus described by \cite{Fishel}, to apply low pressure gradients on the core sample. The schematic of the apparatus is shown in Fig.~\ref{fig:apparatus}. The U-tube type apparatus applies the pressure gradient on a saturated core sample through the difference of fluid levels in the columns. This difference in head is then converted to pressure gradient from the knowledge of density of the working fluid and the length of the core sample. The flow rate is calculated by noting the change in level with respect to time. This rate is then converted to superficial velocity by knowing the geometry of the setup and the core sample.

\begin{figure}
\centering
	\includegraphics[width=7cm]{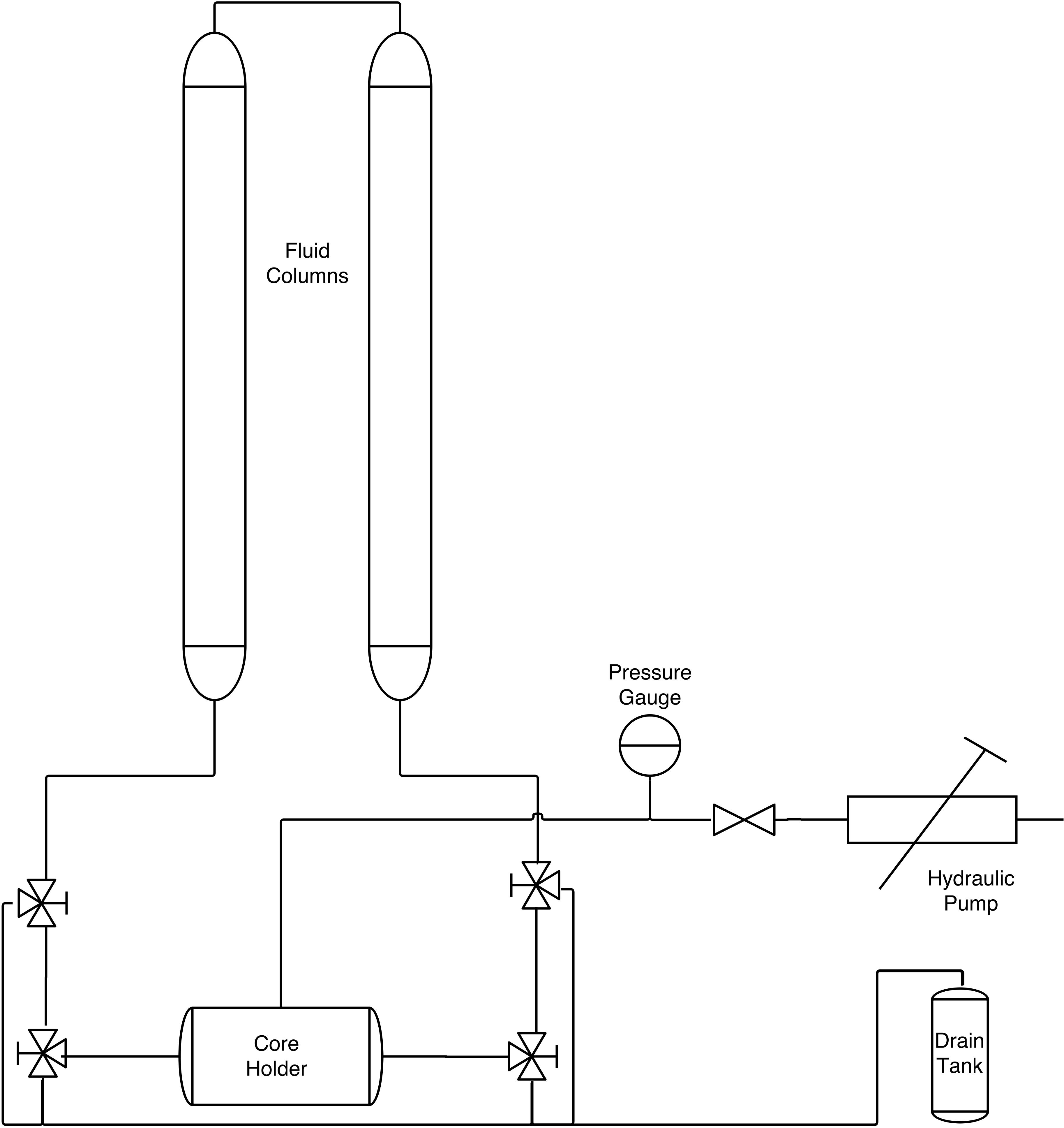}
	\vspace{0.0cm}
	\caption{Schematic of the U-tube type apparatus described by \cite{Fishel}}
	\label{fig:apparatus}
\end{figure}

Consolidated core samples from sandstone reservoirs of various permeabilities were used to study the Pre-Darcy effect in these experiments. All of the experiments point to the existence of pre-Darcy effect. The results of two such experiments are summarized in  Fig.~\ref{fig:experiment} for two different permeabilities $k=34{\rm mD}$ (triangles $\blacktriangle$) and $k=1{\rm mD}$ (squares $\square$).

For larger values of the velocities $v>v_D$, the dependence of pressure gradient $\n p$ on velocity $v$ is linear Darcy, which is as anticipated. However, for smaller values $v<v_D$ the flow exhibits the pre-Darcy effect deviating from the linear relationship. For $k=34{\rm mD}$ the value of power $s=0.6562$, and for $k=1{\rm mD}$  it is $s=0.5772$. (Note that on Fig.~\ref{fig:experiment} the pre-Darcy relation is written in the form $|\n p|=\lambda v^{-s+1}$, see \eqref{eq:g-v-np} and \eqref{eq:prd}.) The transitional velocity $v_D$ appears to be in the range $[$1e-8, 1e-7$]$m/s. 

The results confirm the presence of pre-Darcy effect at low velocities. These experiments also provide the range of velocity values and the parameter $s$ used in this study for calculating the productivity index. 

\begin{figure}
\centering
	\includegraphics[width=11cm]{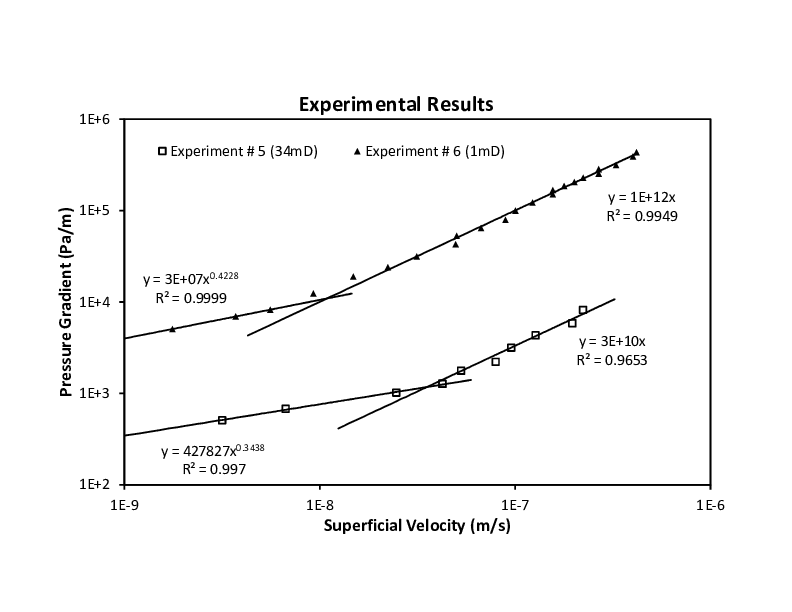}
	\vspace{-0.6cm}
	\caption{Experimental results plotted on a log-log plot of pressure gradient vs velocity. $k=34{\rm mD}$ (triangles $\blacktriangle$) and $k=1{\rm mD}$ (squares $\square$)}
	\label{fig:experiment}
\end{figure}

\section{Conclusions}
In this paper we explore the impact of the nonlinearity of the flow of slightly compressible fluid on the value of PI of the well. The single mathematical formulation is used to combine three flow regimes, pre-Darcy, Darcy and post-Darcy dependent on the critical transitional velocities.
The existence of pre-Darcy effect for small flow velocities is confirmed by our experimental results obtained for several permeability distributions. This agrees with the results obtained by other researchers. Our  analytical formulas for  PI for radial symmetric flows and consequent computations  show the significant impact of pre-Darcy effect on the value of PI and thus on management of the reservoir-well system and hydrocarbon extraction. 
\begin{itemize}
 \item Dependence on flux $Q$: It is a well-known fact that the Darcy PI does not depend on the production rate $Q$ and Forchheimer PI decreases to zero with increasing $Q$. Pre-Darcy PI is essentially zero for small values of $Q$ even in PSS case. FDpD PI then behaves as pre-Darcy PI for the small values of $Q$, and as Forchheimer PI for the large. 
 \item Dependence on pre-Darcy power $s$: The  pre-Darcy PI is the same as Darcy PI for $s=0$. With $s$ increasing to 1, the PI with pre-Darcy component (DDpD, FDpD, pre-Darcy) decreases. For smaller $Q$ it decreases faster. The difference between DDpD and FDpD PI disappears for larger values of $s$.
 \item Dependence on the transitional velocity from Darcy to pre-Darcy regime: FDpD PI decreases with increasing critical velocity (dominating slow pre-Darcy zone). The larger is the power $s$ the more dramatic is the drop of the PI. 
\end{itemize}

\section*{Acknowledgments}
The research of the second author was partially supported by 
the NSF grant DMS-1412796.

%======================================================%
\bibliographystyle{elsarticle-num}
\bibliography{reference}

%======================================================%

%======================================================%

\begin{figure}
  \centering
  \includegraphics[width=0.7\textwidth]{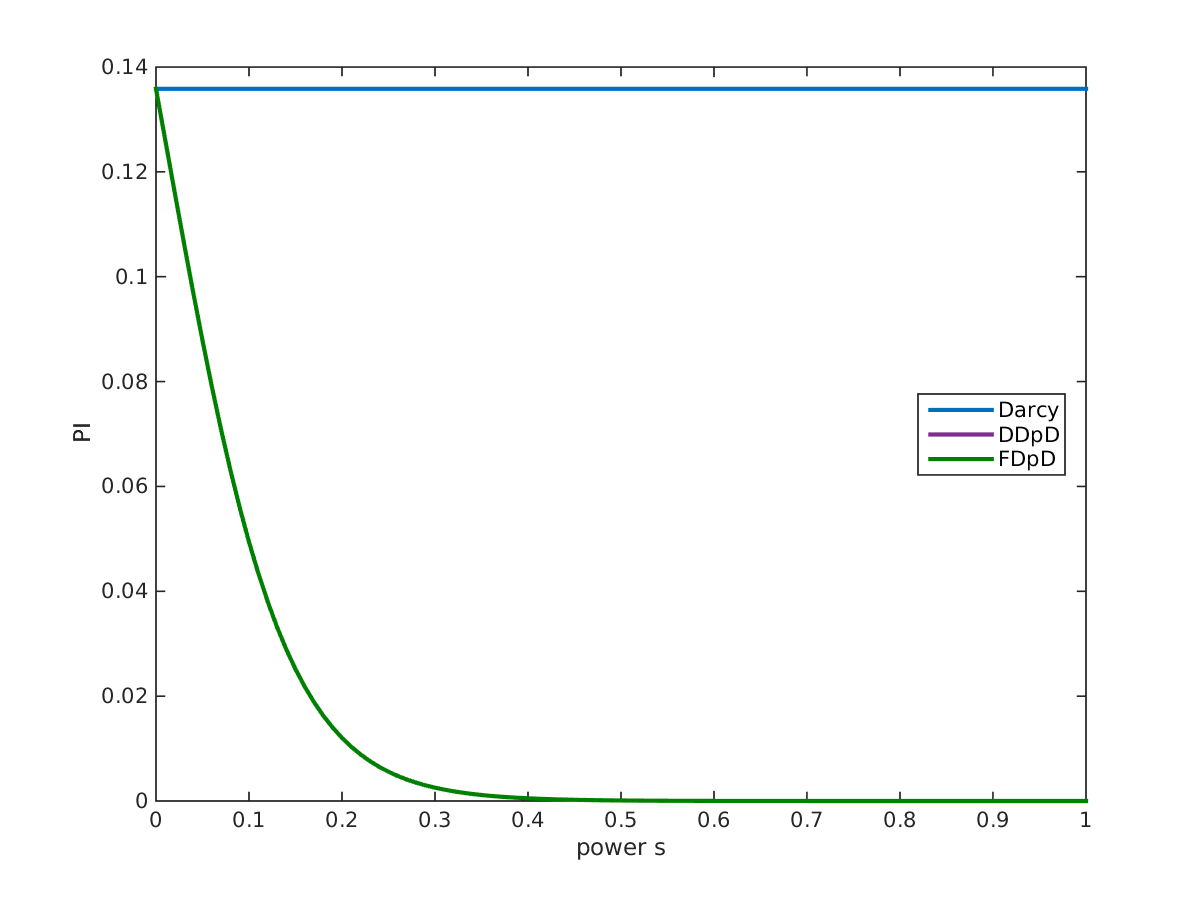}
  \vspace{-0.6cm}
  \caption{Dependence   on power $s$ for $Q/h=10^{-4}{\rm m^2/sec}$, $v_D=10^{-6}{\rm m/sec}$}
  \label{fig:vsS-vd-6}
 \end{figure}
 \begin{figure}
   \centering
  \includegraphics[width=0.7\textwidth]{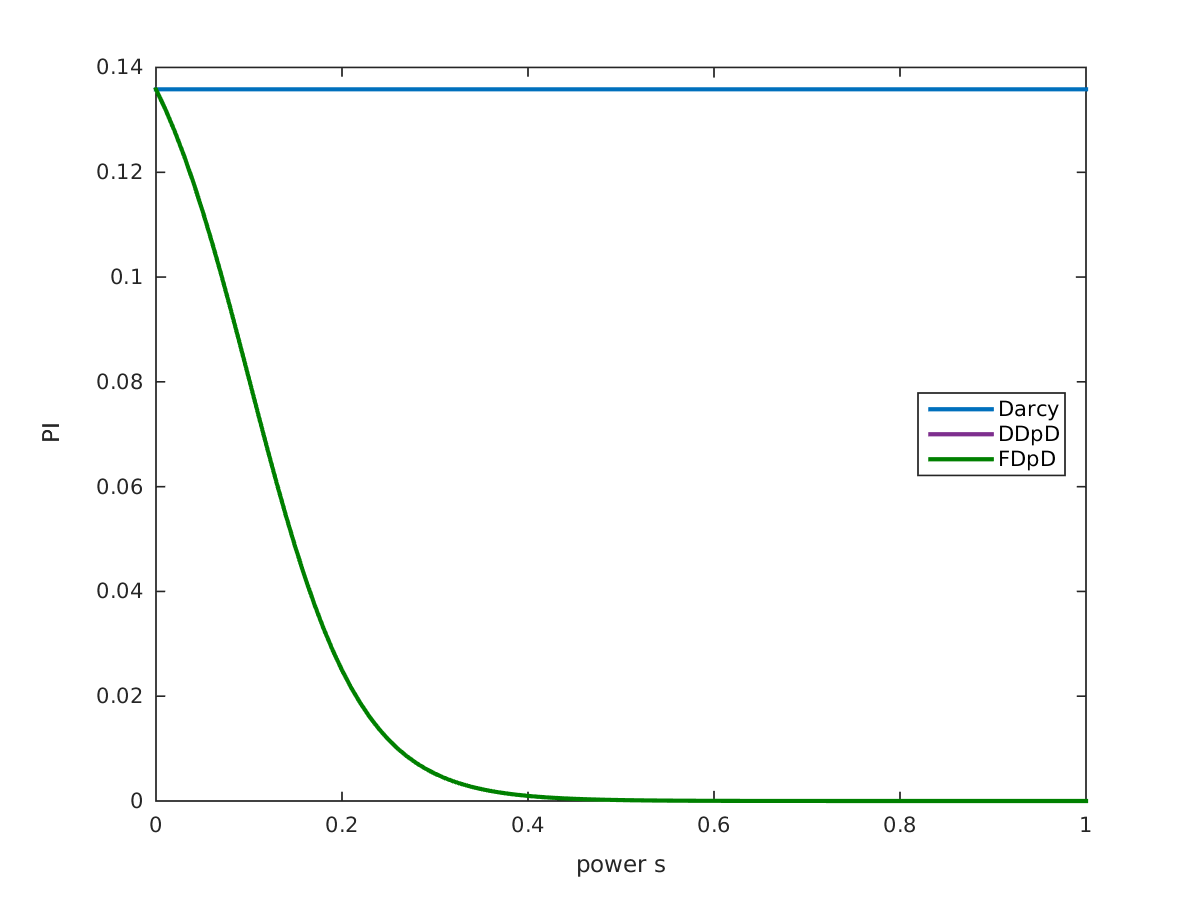}
    \vspace{-0.6cm}
  \caption{Dependence  on power $s$ for $Q/h=10^{-4}{\rm m^2/sec}$, $v_D=10^{-7}{\rm m/sec}$}
    \label{fig:vsS-vd-7}
  \end{figure}
 \begin{figure}
   \centering
    \includegraphics[width=0.7\textwidth]{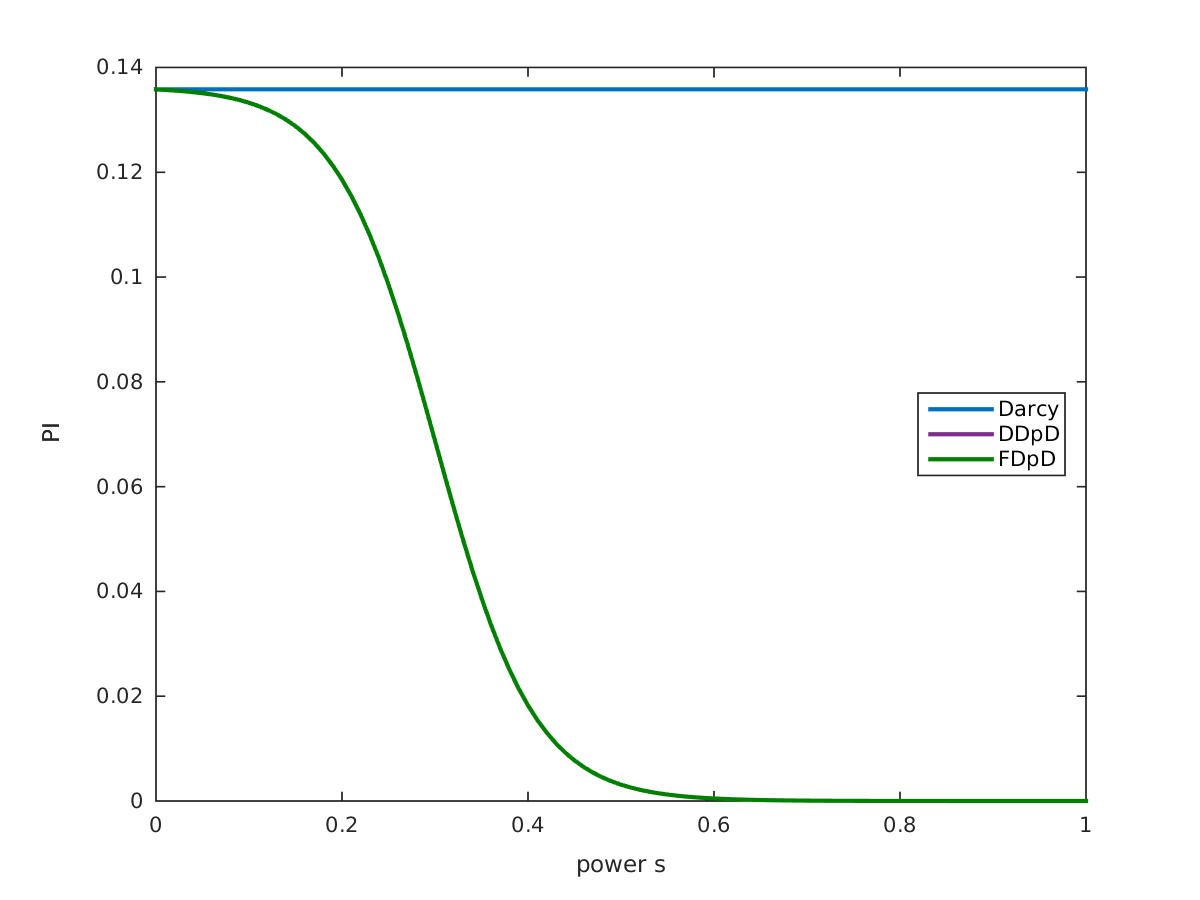}
    \vspace{-0.6cm}
  \caption{Dependence   on power $s$ for $Q/h=10^{-4}{\rm m^2/sec}$, $v_D=10^{-8}{\rm m/sec}$}
    \label{fig:vsS-vd-8}
  \end{figure}
 \begin{figure}
   \centering
  \includegraphics[width=0.7\textwidth]{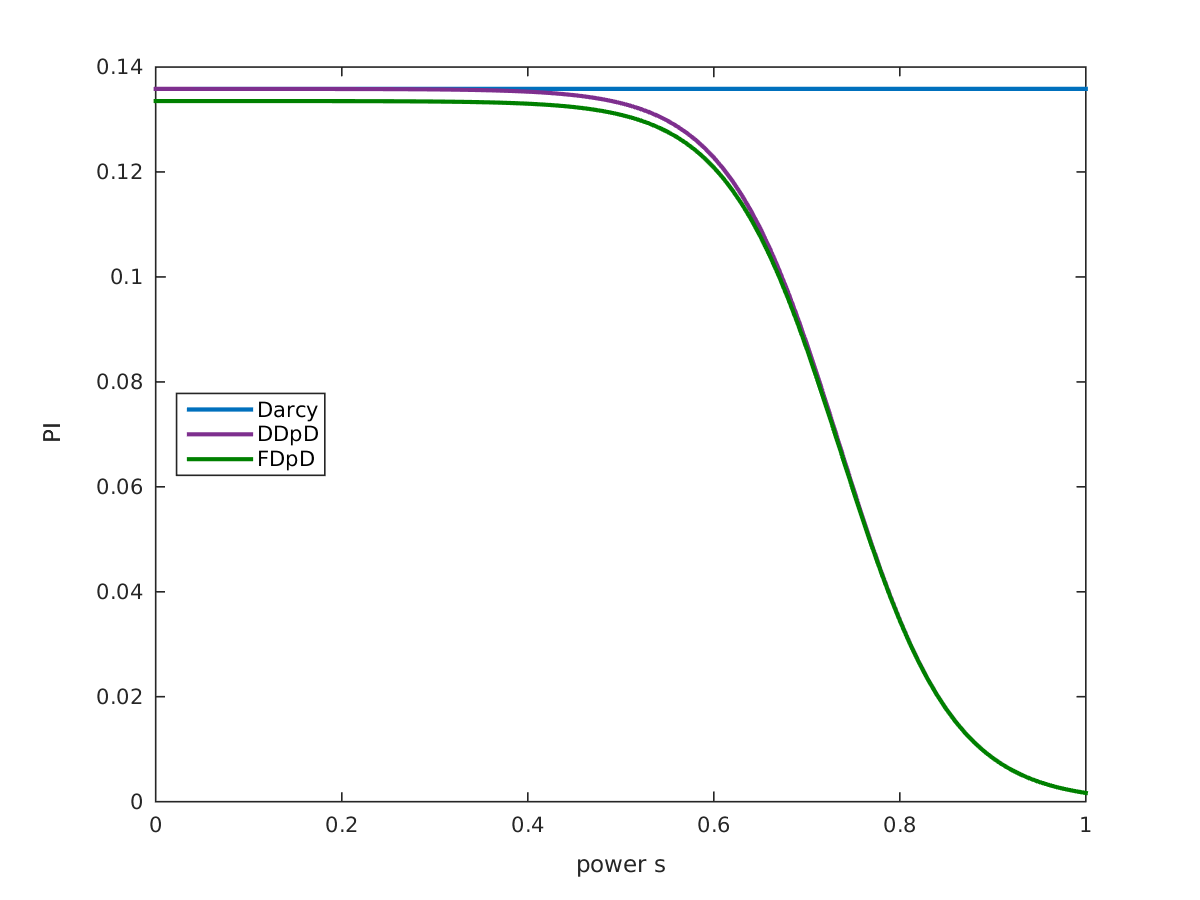}
  \vspace{-0.6cm}
  \caption{Dependence   on power $s$ for $Q/h=10^{-2}{\rm m^2sec}$, $v_D=10^{-7}{\rm m/sec}$}  
    \label{fig:vsS-vd-7-Q-2}
\end{figure}

%======================================================%

\begin{figure}%[!h]
  \centering
  \includegraphics[width=0.7\textwidth]{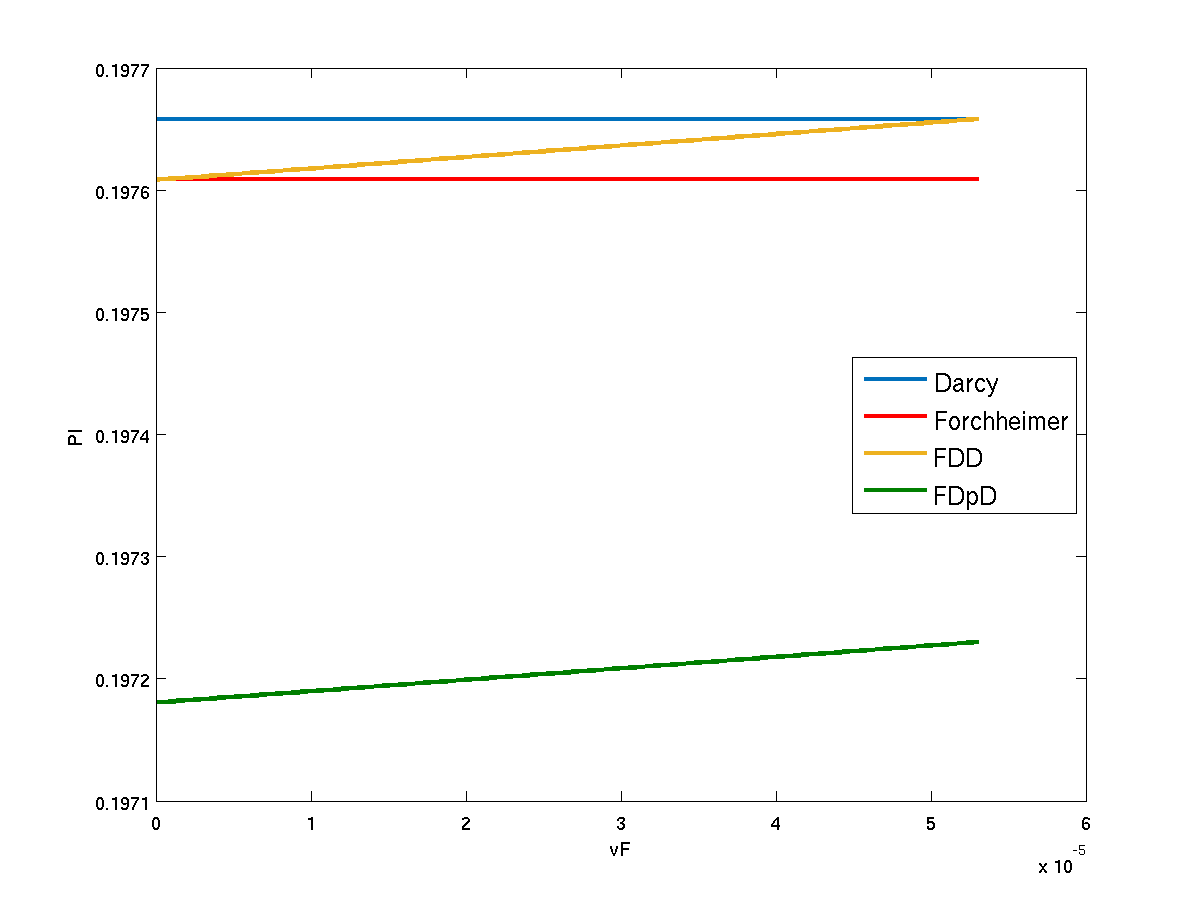}
  \vspace{-0.6cm}
  \caption{Dependence  on $v_F$ for $s=0.3$, $r_e=100{\rm m}$}
  \label{fig:vsVF-s=0.3}
\end{figure}  
\begin{figure}
  \centering
  \includegraphics[width=0.7\textwidth]{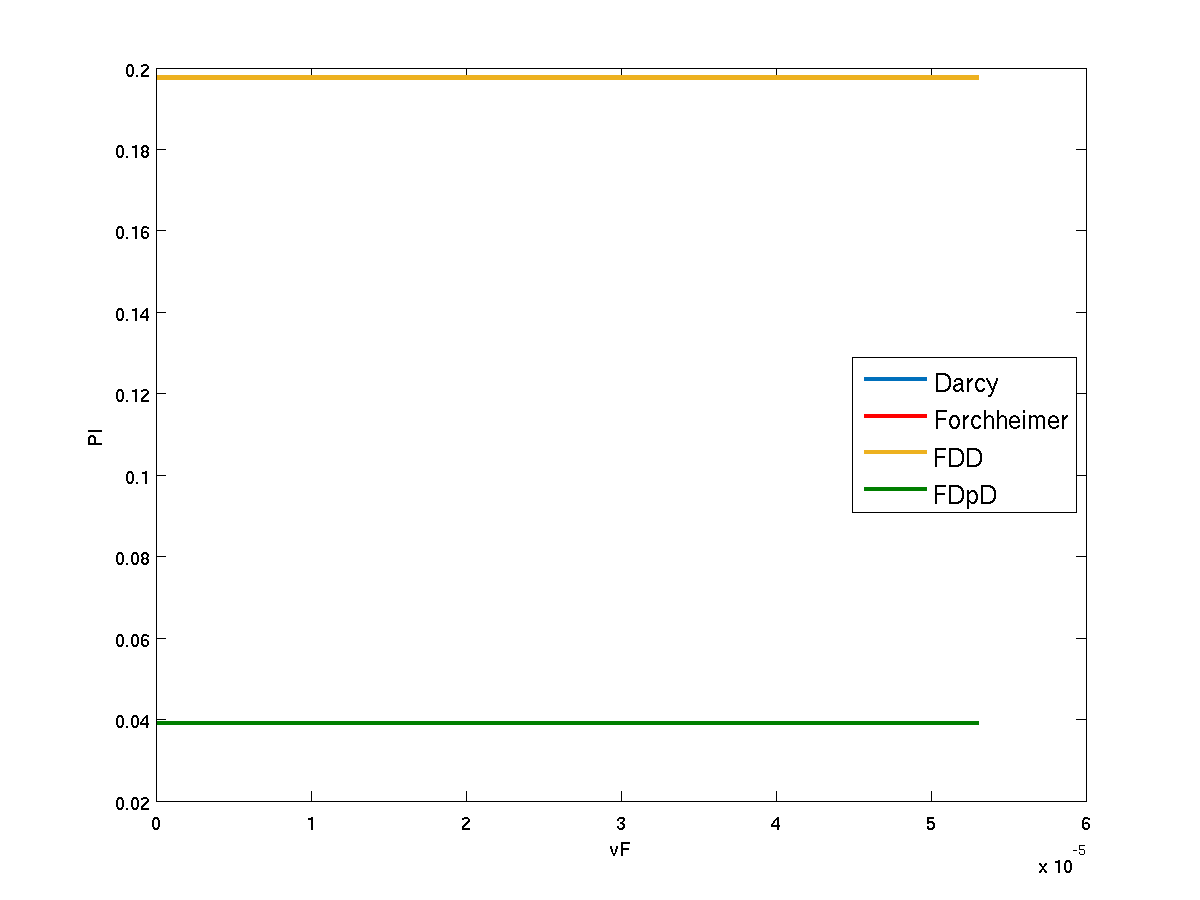}
    \vspace{-0.6cm}
  \caption{Dependence  on $v_F$ for  $s=0.7$, $r_e=100{\rm m}$}
  \label{fig:vsVF-s=0.7}
\end{figure}

%======================================================%

\begin{figure}%[!h]
  \centering
  \includegraphics[width=0.7\textwidth]{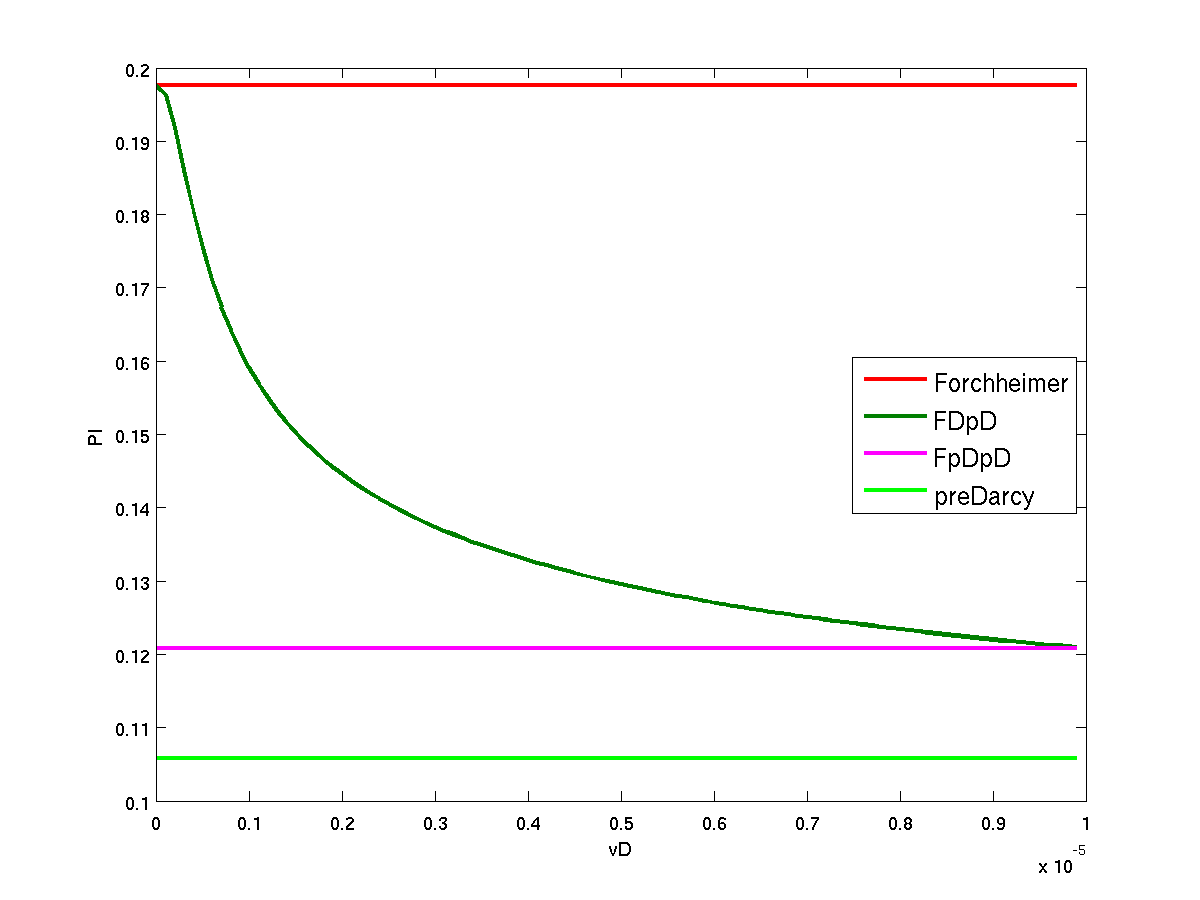}
  \vspace{-0.6cm}
  \caption{Dependence  on $v_D$ for  $s=0.05$, $r_e=100{\rm m}$}
    \label{fig:vsVD-s=0.05}
\end{figure}  

\begin{figure}
  \centering
  \includegraphics[width=0.7\textwidth]{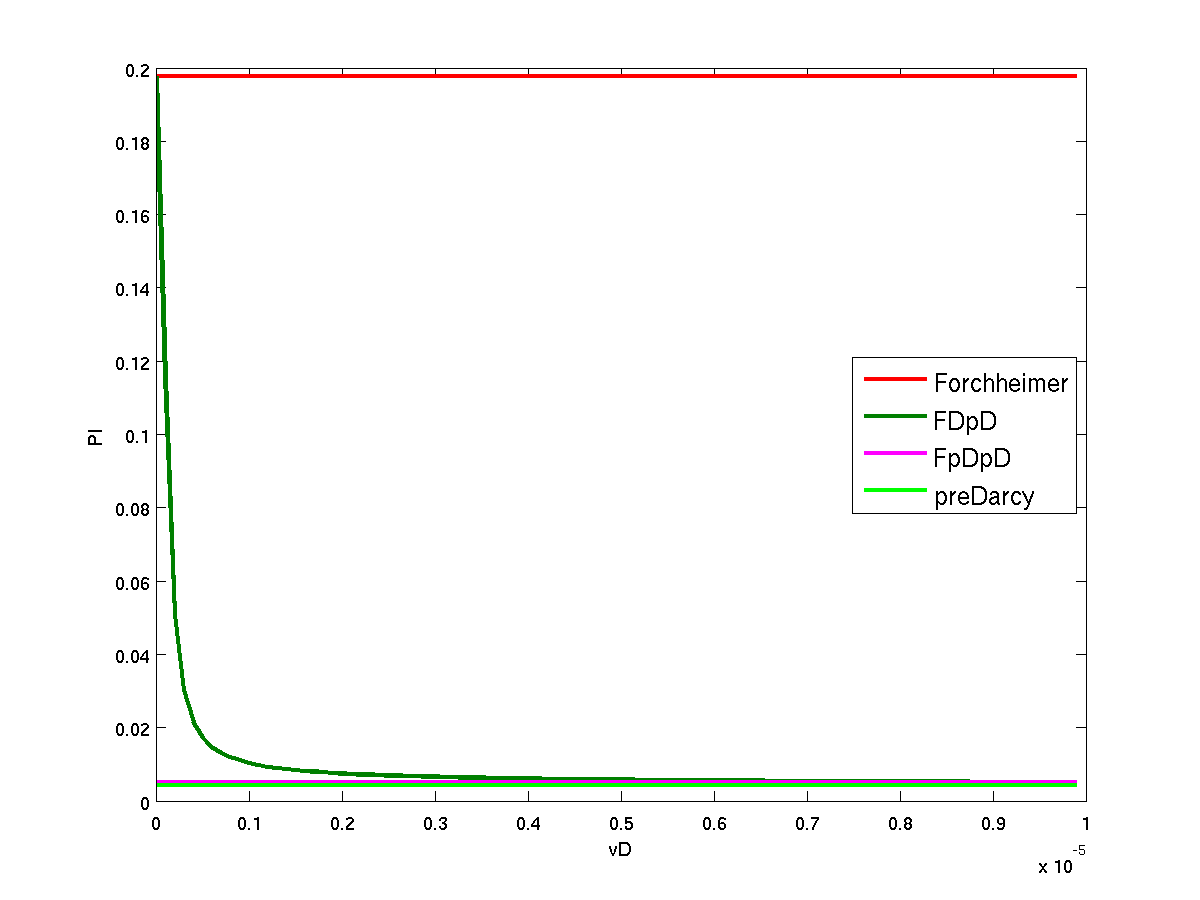}
    \vspace{-0.6cm}
  \caption{Dependence on $v_D$ for $s=0.3$, $r_e=100{\rm m}$}
  \label{fig:vsVD-s=0.3}
\end{figure}

\end{document}